\theoremstyle{plain}
\newtheorem{theorem}{Theorem}[section]
\newtheorem{lemma}[theorem]{Lemma}
\newtheorem{proposition}[theorem]{Proposition}
\newtheorem{corollary}[theorem]{Corollary}
\newtheorem{remark}[theorem]{Remark}
\newcommand{\argmax}{\operatornamewithlimits{argmax}}
\begin{document}

\title{Timing in the Presence of Directional Predictability:\\ Optimal Stopping of Skew Brownian Motion}

\author{Luis H. R. Alvarez E.\thanks{Department of Accounting and Finance, Turku School of Economics,
FIN-20014 University of Turku, Finland, E-mail: luis.alvarez@tse.fi}\quad
Paavo Salminen\thanks{Faculty of Science and Engineering, Åbo Akademi University, FIN-20500 Åbo,
Finland, E-mail: paavo.salminen@abo.fi}}
\maketitle
\abstract{We investigate a class of optimal stopping problems arising in, for example, studies considering the timing of an irreversible investment when the underlying follows a skew Brownian motion.
Our results indicate that the local directional predictability modeled by the presence of a skew point for the underlying has a nontrivial and somewhat surprising impact on the timing incentives of the decision maker. We prove that waiting is always optimal at the skew point for a large class of exercise payoffs. An interesting consequence of this finding, which is in sharp contrast with studies relying on
ordinary Brownian motion, is that the exercise region for the problem can become unconnected even when the payoff is linear. We also establish that higher skewness increases the incentives to wait and postpones the optimal timing of an investment opportunity. Our general results are explicitly illustrated for a piecewise linear payoff.}\\

\noindent{\bf AMS Subject Classification:} 60J60, 60G40, 62L15\\

\noindent{\bf Keywords:} skew Brownian motion, optimal stopping, excessive function, irreversible investment, Martin representation

\thispagestyle{empty} \clearpage \setcounter{page}{1}
\section{Introduction}

Standard Brownian motion constitutes without a doubt the most commonly utilized model for the factor dynamics driving the underlying stochasticity in financial models.
Its analytical tractability and computational facility makes it a compelling model with many desirable properties ranging from the independence of its increments to
the Gaussianity of its probability distribution. Unfortunately, for many financial return variables the presence of autocorrelation of the driving dynamics and/or skewness of the probability distributions constitutes a rule rather than an exception. It is clear that in such a case relying on a simple Gaussian structure may result in wrong conclusions concerning both the valuation and the timing of investment
opportunities.

In contrast with the standard Gaussian framework, relatively recent empirical research indicates that even though the exact value of an asset is unpredictable, the direction towards which the asset value is expected to develop may be predictable to some extent (see, for example, \cite{AnGe}, \cite{AnGo}, \cite{BeGe1}, \cite{BeGe2}, \cite{Ch}, \cite{ChDi}, \cite{Chetal}, \cite{Ny}, \cite{RySh}, and \cite{Sk}). More precisely, expressing the return of an asset as the product of its sign and its absolute value and investigating the behavior of these factors separately indicates that the sign variable capturing the directional behavior of the return can be forecasted correctly with an accuracy ranging from 52\% to even 60\% (for a recent survey of studies focusing on directional predictability, see \cite{Ham}). This empirical observation has not went completely unnoticed in theoretical finance studies and it has resulted into the introduction and the analysis of driving dynamics possessing at least some of the skewness and the local (in space) predictive properties encountered in financial data. One of the proposed modeling approaches is based on skew Brownian motion and skew diffusion processes in general (cf. \cite{CoSa}, \cite{DeScGoSc}, \cite{DeGoSc,DeGoSc2}, and \cite{Ro}). Basically, a skew Brownian motion behaves like an ordinary Brownian motion outside the origin (see, for example, \cite{Appuetal}, \cite{Appuetal2}, \cite{Ba}, \cite{BS}, \cite{BuCh}, \cite{HaSh}, \cite{IM}, \cite{Le}, \cite{LeMoTo}, \cite{Ou}, \cite{Vu}, \cite{Wa}). However, at the origin the process has more tendency to move, say, upwards than downwards resulting in a sense  into a larger number of positive than negative excursions starting from the origin. In that way it offers a mathematical model for local directional predictability of the driving random factor and, consequently, to an asymmetric and skewed probability distribution of the underlying random dynamics.

In this paper we investigate how the singularity generated by the skewness of the underlying driving diffusion affect optimal stopping policies within an infinite horizon setting. Our approach for solving the considered optimal stopping problem is based on the scrutinized analysis of the superharmonic functions (see, for example, \cite{Al}, \cite{BeLe1}, \cite{BeLe}, \cite{ChIr}, \cite{CrMo}, \cite{DaKa}, \cite{EkVi}, \cite{Sa2}, and \cite{SaTa} and references therein). In particular, we use the Martin representation theory of superharmonic functions (cf. \cite{ChIr} and \cite{salminen}).
We demonstrate that positive skewness increases the incentives to wait at the singularity so radically that the skew point is always included in the continuation region provided that the exercise payoff is increasing at the skew point. This observation is in sharp contrast with results based on standard Brownian motion and illustrates how even relatively small local predictability of the underlying diffusion generates incentives to wait and, in that way, postpone the optimal stopping of the underlying process. An interesting and to some extent surprising implication of this observation is that the optimal stopping policy for skew BM can become a three-boundary policy even in the case where the exercise payoff is piecewise linear (call option type). Such configurations cannot appear in models relying on standard BM. We also demonstrate that the sign of the dependence of the value of the optimal policy and the skewness of the underlying diffusion is positive. Consequently, higher skewness increases the value of the optimal policy and expands the continuation region. An interesting implication of this observation is that the value of the optimal stopping strategy  for a positively skew BM dominates the corresponding value for standard BM.

The contents of this study are as follows. The basic properties of the underlying dynamics,
i.e., skew Brownian motion, are discussed in Section 2. In Section 3
the considered stopping problem and some key facts are presented. Our main findings on optimal stopping of skew Brownian motion are summarized in Section 4. These results are then numerically illustrated in an explicitly parameterized piecewise linear model in Section 5. Finally, Section 6 concludes our study.

\section{Underlying Dynamics: Skew Brownian Motion}
Our main objective is to investigate how the potential directional asymmetry of the underlying diffusion affects the optimal exercise strategies and their values.
In order to accomplish this task, we assume that the underlying diffusion process is a {\em skew Brownian motion} (abbreviated from now on as SBM) characterized as the unique strong solution of the SDE (cf. \cite{HaSh})
\begin{align}
X_t = x + W_t + (2\beta-1) l_t^X,\label{skew}
\end{align}
where $x\in \mathbb{R}$ is the initial value of the process,
$\beta\in[0,1]$ is a parameter capturing the skewness of the process,
$\{W_t\}_{t\geq 0}$ is a standard Brownian motion and $\{l_t^X\}_{t\geq 0}$ is the local time at zero of the process $\{X_t\}_{t\geq 0}$ normalized with respect to Lebesgue's measure. As is clear from \eqref{skew}, the process $\{X_t\}_{t\geq 0}$ coincides with standard Brownian motion when $\beta= 1/2$ and with reflected Brownian motion when  $\beta = 0$ or $\beta = 1$. The process $\{X_t\}_{t\geq 0}$ behaves like ordinary Brownian motion outside the skew point $0$ and has for all $t>0$ the property $\mathbb{P}_0[X_t\geq 0]=\beta$ (cf. \cite{BS}, p. 130). Thus, the process has in a sense more tendency  to move up than down from the origin whenever $\beta>1/2$. Moreover, utilizing the known transition probability density  (see, for example, \cite{BS}, p. 130 or \cite{Le}, p. 420)
\begin{align}
\mathbb{P}_x\left[X_t\in dy\right] = \left(\frac{1}{\sqrt{2\pi t}}\;\textrm{e}^{-\frac{(x-y)^2}{2t}}+(2\beta-1)\textrm{sgn}(y)\frac{1}{\sqrt{2\pi t}}\;\textrm{e}^{-\frac{\left(|x|+|y|\right)^2}{2t}}\right)dy,\label{skewdensity}
\end{align}
of SBM yields
\begin{align}
\mathbb{E}_x\left[X_t\right] = x + 2(2\beta-1)\sqrt{t}\;\phi\left(\frac{|x|}{\sqrt{t}}\right)-2(2\beta-1)|x|\; \Phi\left(-\frac{|x|}{\sqrt{t}}\right),\label{skewexpected}
\end{align}
where $\Phi$ is the standard univariate normal distribution function and  $\phi$ is its density. Setting $x=0$ in \eqref{skewexpected} yields
$$
\mathbb{E}_0[X_t] = (2\beta-1)\sqrt{\frac{2t}{\pi}}.
$$
The moment generating function, in turn, reads as
\begin{align*}
\mathbb{E}_x\left[\mathrm{e}^{\lambda X_t}\right] = \textrm{e}^{\lambda x+\frac{1}{2}\lambda^2 t}\left(1+(2\beta-1)\textrm{e}^{-\lambda (|x|+ x)}\Phi\left(\frac{\lambda t-|x|}{\sqrt{t}}\right)
-(2\beta-1)\textrm{e}^{\lambda (|x|-x)}\Phi\left(-\frac{\lambda t+|x|}{\sqrt{t}}\right)\right).
\end{align*}
The scale function and the speed measure of $X$ are given by
$$
S(x) =\begin{cases}
x/\beta, &x\geq0,\\
x/(1-\beta), &x\leq0,
\end{cases}
$$
and
$$
m(dx) =\begin{cases}
2\beta dx, &x>0,\\
2(1-\beta)dx, &x<0,
\end{cases}
$$
respectively. The fact that $S(x)\to\pm\infty$ as $x\to\pm\infty$
implies that $X$ is recurrent. Finally, the increasing and the decreasing fundamental solutions associated with $X$ are (cf. \cite{BS}, p. 130)
\begin{align}\label{increasing}
\psi_r(x)= \textrm{e}^{\theta x}-\left(1-\frac{1}{2\beta}\right)\left(\textrm{e}^{\theta x}-\textrm{e}^{-\theta x}\right)^+=\begin{cases}
 \frac{1}{2\beta}\textrm{e}^{\theta x}+\left(1-\frac{1}{2\beta}\right)\textrm{e}^{-\theta x},&x\geq0,\\
\textrm{e}^{\theta x},&x\leq0,
\end{cases}
\end{align}
and
\begin{align}\label{decreasing}
\varphi_r(x)=  \textrm{e}^{-\theta x} +\frac{2\beta-1}{2(1-\beta)}(\textrm{e}^{-\theta x}-\textrm{e}^{\theta x})^+=\begin{cases}
\textrm{e}^{-\theta x},&x\geq0,\\
 \frac{1}{2(1-\beta)}\left((1-2\beta)\textrm{e}^{\theta x}+\textrm{e}^{-\theta x}\right),&x\leq0,
\end{cases}
\end{align}
respectively, where $\theta=\sqrt{2r}$ is the so-called Wronskian of
the fundamental solutions with respect to the scale function. It is
easily seen that $\psi_r$ and $\varphi_r$ are differentiable with
respect to $S$ everywhere (also at 0), but not in the ordinary sense at
0.

\section{Problem Setting and Some Preliminary Results}

 Our task is to investigate for  SBM $X$ with $\beta>1/2$ how the
 skewness and the resulting local directional predictability of the
 underlying affects the value and optimal exercise policy in the
 optimal stopping problem (OSP):

\noindent
{\sl{Find a stopping time $\tau^\ast$ such that
\begin{align}\label{osp}
V(x):=\sup_{\tau\in \mathcal{T}}\mathbb{E}_x\left[\textrm{e}^{-r\tau}g(X_\tau)\right] = \mathbb{E}_x\left[\textrm{e}^{-r{\tau^\ast}}g(X_{\tau^\ast})\right],
\end{align}
where $r>0$ denotes the prevailing discount rate, $\mathcal{T}$ is the set of all stopping times with respect to the natural filtration generated by $X$, and $g:\mathbb{R}\mapsto \mathbb{R}_+$ is the exercise reward satisfying:
\begin{itemize}
\item[\rm(g1)] $g$ is continuous, non-decreasing, non-negative, and has finite left and right derivatives,
\item[\rm(g2)]  $\lim_{x\rightarrow \infty}g(x)/\psi_r(x)=0$ and $\lim_{x\rightarrow -\infty}g(x)/\psi_r(x)=0$.
\end{itemize}
}}
In \eqref{osp} we use the convention that if $\tau(\omega)=\infty$ then
$$
\textrm{e}^{-r\tau(\omega)}g(X_{\tau(\omega)}(\omega)):=\limsup_{t\rightarrow\infty}\textrm{e}^{-rt}g(X_t(\omega)).
$$

As is known from the literature on optimal stopping $V$ is the smallest $r$-excessive majorant of $g$ (cf. Theorem 1 on p. 124 of \cite{Sh}).
As usual, we call $\Gamma:=\{x:V(x)=g(x)\}$ the stopping region and $C:=\{x:V(x)>g(x)\}$ the continuation region.
Let
\begin{align}
\mathcal{M}:=\argmax_{x\in  \mathbb{R}}\{g(x)/\psi_r(x)\}\label{M}
\end{align}
denote the set of points at which the ratio $g/\psi_r$ is maximized.
We can now prove the following:
\begin{lemma}\label{lemma1}
The value of the optimal policy is finite, i.e. $V(x)<\infty$ for all $x\in \mathbb{R}$, and the stopping region is nonempty, i.e. $\Gamma\neq\emptyset$.
\end{lemma}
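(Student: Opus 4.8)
The plan is to dominate $V$ from above by a constant multiple of the increasing fundamental solution $\psi_r$, using the growth condition (g2), and then to exhibit a point of the stopping region inside the set $\mathcal{M}$ defined in \eqref{M}. To this end put $M:=\sup_{x\in\mathbb{R}}g(x)/\psi_r(x)$. The map $x\mapsto g(x)/\psi_r(x)$ is continuous, since $g$ is continuous by (g1) and $\psi_r$ is continuous and strictly positive, and by (g2) it tends to $0$ as $x\to\pm\infty$. If $g\equiv 0$ the lemma is trivial ($V\equiv 0$), so assume $M>0$; then there is a compact interval outside which $g/\psi_r<M/2$, and on that interval the supremum is attained, so $M<\infty$ and $\mathcal{M}\neq\emptyset$. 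By construction $g(x)\le M\psi_r(x)$ for all $x\in\mathbb{R}$.

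Next I would record that $M\psi_r$ is $r$-excessive. Because $\psi_r$ is a fundamental solution associated with $X$ and $r$ (differentiable with respect to $S$ everywhere, including the skew point $0$, as noted at the end of Section~2), the discounted process $\{\mathrm{e}^{-rt}\psi_r(X_t)\}_{t\ge 0}$ is a nonnegative local martingale under each $\mathbb{P}_x$, hence a supermartingale; thus $\mathbb{E}_x[\mathrm{e}^{-rt}\psi_r(X_t)]\le\psi_r(x)$ with equality in the limit $t\downarrow 0$, so $\psi_r$, and therefore $M\psi_r$, is $r$-excessive. Combined with $g\le M\psi_r$ this shows $M\psi_r$ is an $r$-excessive majorant of $g$, and since $V$ is the smallest $r$-excessive majorant of $g$ (Theorem~1, p.~124 of \cite{Sh}) we obtain $g(x)\le V(x)\le M\psi_r(x)<\infty$ for every $x$, which is the first assertion. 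In passing, recurrence of $X$ forces $\mathrm{e}^{-rt}\psi_r(X_t)\to 0$ a.s.\ (the nonnegative supermartingale converges, and along the instants at which $X$ hits $0$ it equals $\mathrm{e}^{-rt}\psi_r(0)\to 0$), so the $\limsup$ convention in \eqref{osp} is consistent with assigning value $0$ on $\{\tau=\infty\}$.

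For the second assertion, take any $x^\ast\in\mathcal{M}$. Then $g(x^\ast)=M\psi_r(x^\ast)\ge V(x^\ast)\ge g(x^\ast)$, where the first inequality comes from the bound just established and the second from $V$ being a majorant of $g$; hence $V(x^\ast)=g(x^\ast)$, i.e.\ $x^\ast\in\Gamma$. Consequently $\emptyset\neq\mathcal{M}\subseteq\Gamma$, so $\Gamma\neq\emptyset$.

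The only genuinely substantive step is the $r$-excessivity of $\psi_r$ together with the resulting bound $V\le M\psi_r$; the rest is bookkeeping. The point requiring care there is that $\psi_r$ is not twice differentiable in the ordinary sense at the skew point $0$, so the (super)martingale property of $\mathrm{e}^{-rt}\psi_r(X_t)$ must be justified through the Itô--Tanaka formula for $X$ — which is precisely where differentiability with respect to the scale function $S$, rather than in the classical sense, is used.
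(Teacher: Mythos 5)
Your proof is correct and follows essentially the same route as the paper: both arguments rest on the supermartingale property of $\{\mathrm{e}^{-rt}\psi_r(X_t)\}_{t\ge 0}$ to obtain the bound $V\le \bigl(\sup_y g(y)/\psi_r(y)\bigr)\psi_r$ and then evaluate this bound at a maximizer of $g/\psi_r$ to place it in $\Gamma$. The only (immaterial) difference is that you channel the bound through the smallest-$r$-excessive-majorant characterization of $V$ with $M\psi_r$ as a competing majorant, whereas the paper applies optional sampling directly inside the supremum defining $V$; your added details (attainment of the supremum, the $\limsup$ convention) are fine.
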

\begin{proof}
Assumptions (g1) and (g2) guarantee that the set of maximizers $\mathcal{M}$ is non-empty.
Hence, for all $x\in \mathbb{R}$ it holds that
\begin{align}\label{bound}
\nonumber
V(x)=\sup_{\tau\in
    \mathcal{T}}\mathbb{E}_x\left[\textrm{e}^{-r\tau}\frac{g(X_\tau)}{\psi_r(X_\tau)}\psi_r(X_\tau)\right]
&  \leq  \sup_{y\in \mathbb{R}}\frac{g(y)}{\psi_r(y)}\sup_{\tau\in
    \mathcal{T}}\mathbb{E}_x\left[\textrm{e}^{-r\tau}\psi_r(X_\tau)\right]
\\
&\leq \psi_r(x)\sup_{y\in \mathbb{R}}\frac{g(y)}{\psi_r(y)}.
\end{align}
For the last inequality in \eqref{bound} we use the optional
sampling theorem which is
justified since $\{\textrm{e}^{-rt}\psi_r(X_t)\}_{t\geq 0}$ is a positive supermartingale. This proves that $V(x)<\infty$ for all $x\in \mathbb{R}$. In order to show that $\Gamma\neq\emptyset$ let $x^\ast\in \mathcal{M}$ and utilize \eqref{bound} to obtain
$$
V(x^\ast)\leq \psi_r(x^\ast)\frac{g(x^\ast)}{\psi_r(x^\ast)} = g(x^\ast)
$$
proving that $x^\ast\in \Gamma$.
\end{proof}
Next we establish a result used to verify that a candidate strategy is optimal.
This is essentially Corollary on p. 124 in \cite{Sh}. We present the proof for readability and completeness.
\begin{lemma}\label{apu}
Let $A\subset \mathcal{I}$ be a nonempty Borel subset of $\mathcal{I}$ and $\tau_A:=\inf\{t\geq 0:X_t\in A\}$. Assume that the function
$$
\hat{V}(x):=\mathbb{E}_x\left[\textrm{e}^{-r\tau_A}g(X_{\tau_A})\right]
$$
is $r$-excessive and dominates $g$. Then, $V=\hat{V}$ and $\tau_A$ is
an  optimal stopping time. Moreover, $\tau_A$ is
finite almost surely.
\end{lemma}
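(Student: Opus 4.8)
The strategy is the standard verification argument: sandwich $V$ between $\hat V$ from both sides, using the characterization --- recalled just before the lemma --- of $V$ as the smallest $r$-excessive majorant of $g$. Since $\tau_A$ is itself a stopping time in $\mathcal T$, the definition of $V$ in \eqref{osp} as a supremum over $\mathcal T$ gives immediately $\hat V(x)=\mathbb E_x[\mathrm e^{-r\tau_A}g(X_{\tau_A})]\le V(x)$ for every $x\in\mathbb R$. For the reverse inequality I would invoke Theorem~1 on p.~124 of \cite{Sh}: by hypothesis $\hat V$ is $r$-excessive and $\hat V\ge g$, i.e.\ $\hat V$ is an $r$-excessive majorant of $g$, and minimality of $V$ among all such majorants forces $V\le\hat V$. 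Combining the two inequalities yields $V=\hat V$, and then $\mathbb E_x[\mathrm e^{-r\tau_A}g(X_{\tau_A})]=\hat V(x)=V(x)$ shows that the supremum defining $V$ in \eqref{osp} is attained at $\tau_A$, i.e.\ $\tau_A$ is an optimal stopping time. (In passing, $\hat V\le V<\infty$ by Lemma~\ref{lemma1}, so $\hat V$ is finite-valued.)

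It then remains to verify $\tau_A<\infty$ $\mathbb P_x$-a.s.\ for every $x$. Here I would rely on the recurrence of $X$ noted in Section~2, which follows from $S(x)\to\pm\infty$ as $x\to\pm\infty$. Choose a point $a\in A$, which is possible because $A$ is nonempty. As a regular recurrent one-dimensional diffusion, $X$ satisfies $\mathbb P_x[\tau_{\{a\}}<\infty]=1$ for all $x$, where $\tau_{\{a\}}=\inf\{t\ge0:X_t=a\}$; since $\{a\}\subset A$ we have $\tau_A\le\tau_{\{a\}}$, hence $\tau_A<\infty$ a.s. In particular the $\limsup$ convention in \eqref{osp} is never invoked when computing $\hat V$, so there is no ambiguity in its definition.

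I do not expect a genuine obstacle here: this is the ``easy'' verification half of optimal stopping theory, already prepared by the cited result of \cite{Sh} and by Lemma~\ref{lemma1}. The only points requiring a little care are (i) checking that each of the two inequalities is licensed by exactly one hypothesis --- $\tau_A\in\mathcal T$ for $\hat V\le V$, and $r$-excessivity together with $\hat V\ge g$ for $V\le\hat V$ --- and (ii) for the finiteness statement, appealing to recurrence in combination with the fact that a nonempty $A$ contains a point, which $X$ reaches in finite time a.s.
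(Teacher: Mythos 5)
Your proof is correct and takes essentially the same sandwich argument as the paper: $\hat V\le V$ because $\tau_A\in\mathcal T$, recurrence of $X$ together with $A\neq\emptyset$ gives $\tau_A<\infty$ a.s., and $V\le\hat V$ because $\hat V$ is an $r$-excessive majorant of $g$. The only cosmetic difference is that the paper establishes $V\le\hat V$ by the explicit two-step estimate $\sup_{\tau}\mathbb E_x\left[\mathrm e^{-r\tau}g(X_\tau)\right]\le\sup_{\tau}\mathbb E_x\left[\mathrm e^{-r\tau}\hat V(X_\tau)\right]\le\hat V(x)$ (using excessivity of $\hat V$ directly) instead of quoting the smallest-excessive-majorant characterization of $V$ from \cite{Sh}, which the paper in any case recalls just before the lemma.
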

\begin{proof}
Clearly, $\tau_A<\infty$ almost surely since $X$ is recurrent and $A$ is nonempty.
By the definition of $V$ it holds for all $x$
$$
V(x)=\sup_{\tau\in \mathcal{T}} \mathbb{E}_x\left[\textrm{e}^{-r\tau}g(X_\tau)\right]\geq \mathbb{E}_x\left[\mathrm{e}^{-r\tau_A}g(X_{\tau_A})\right] =\hat{V}(x).
$$
On the other hand,  $\hat{V}$ being an $r$-excessive majorant of $g$ yields
$$
V(x)=\sup_{\tau\in \mathcal{T}} \mathbb{E}_x\left[\mathrm{e}^{-r\tau}g(X_\tau)\right]\leq \sup_{\tau\in \mathcal{T}} \mathbb{E}_x\left[\mathrm{e}^{-r\tau}\hat{V}(X_\tau)\right]\leq \hat{V}(x).
$$
Consequently, $V=\hat{V}$ and $\tau_A$ is an optimal stopping time.
\end{proof}

In many optimal stopping problems the set $A$ appearing in Lemma
\ref{apu} turns out to be $\Gamma$ explaining the terminology
"stopping set" for $\Gamma$. This is also the case in our subsequent
analysis where we establish conditions under which the optimal
stopping rule equals  $\tau_{\,\Gamma}$.

\section{Main Results}

Typically optimal stopping problems of the type \eqref{osp} can be investigated quite efficiently by relying on variational inequalities and approaches utilizing the
differential operator associated with the generator of the underlying diffusion. Unfortunately, the use of those approaches for SBM is challenging due to the extra drift component involving a local time term at the skew point, see SDE \eqref{skew}. In order to circumvent this problem, we first focus on the general properties of $r$-excessive functions and characterize general conditions under which the skew point (i.e. the origin) is in the continuation region.
\begin{proposition}\label{l1}
Assume that either $0\leq g'(0-) < g'(0+)$ or $0<g'(0-) \leq g'(0+)$. Then, for SBM with $\beta>1/2$ the state $0$ is for all $r>0$ in the continuation region $C=\{x:V(x)>g(x)\}$ .
\end{proposition}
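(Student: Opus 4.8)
The plan is to bound $V(0)$ from below by the payoff of the elementary ``wait a little'' strategy that stops $X$ when it first exits a small symmetric interval around the skew point, and to check that for $\beta>1/2$ this already strictly exceeds $g(0)$ once $g$ is increasing at $0$. Concretely, for $\delta>0$ set $\tau_\delta:=\inf\{t\geq 0:\ |X_t|=\delta\}$. Since $\tau_\delta\in\mathcal T$ and $X_{\tau_\delta}\in\{-\delta,\delta\}$ almost surely, the definition \eqref{osp} gives at once $V(0)\geq\mathbb{E}_0[\textrm{e}^{-r\tau_\delta}g(X_{\tau_\delta})]=g(\delta)\,p_\delta^{+}+g(-\delta)\,p_\delta^{-}$, where $p_\delta^{\pm}:=\mathbb{E}_0[\textrm{e}^{-r\tau_\delta}\mathbf{1}_{\{X_{\tau_\delta}=\pm\delta\}}]$. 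Hence it suffices to show $g(\delta)\,p_\delta^{+}+g(-\delta)\,p_\delta^{-}>g(0)$ for all sufficiently small $\delta>0$.

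The next step is to make $p_\delta^{\pm}$ explicit. The maps $x\mapsto\mathbb{E}_x[\textrm{e}^{-r\tau_\delta}\mathbf{1}_{\{X_{\tau_\delta}=\pm\delta\}}]$ are $r$-harmonic for $X$ on $(-\delta,\delta)$ and therefore linear combinations of the fundamental solutions $\psi_r$ and $\varphi_r$; fitting the boundary values $0$ and $1$ at the endpoints yields the standard ratio formulas in terms of $\psi_r(\pm\delta)$ and $\varphi_r(\pm\delta)$ (see \cite{BS}), e.g. $p_\delta^{+}=(\varphi_r(-\delta)-\psi_r(-\delta))/(\psi_r(\delta)\varphi_r(-\delta)-\psi_r(-\delta)\varphi_r(\delta))$ after using $\psi_r(0)=\varphi_r(0)=1$. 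Substituting the explicit expressions \eqref{increasing}--\eqref{decreasing} and simplifying — the identity $(2\beta-1)^2+4\beta(1-\beta)=1$ and $\sinh(2z)=2\sinh z\cosh z$ do the work — I expect the clean closed forms $p_\delta^{+}=\beta/\cosh(\theta\delta)$ and $p_\delta^{-}=(1-\beta)/\cosh(\theta\delta)$, with $\theta=\sqrt{2r}$. As a consistency check, $p_\delta^{+}+p_\delta^{-}=1/\cosh(\theta\delta)=1-O(\delta^2)$, and as $\delta\downarrow0$ the exit weights approach $\beta$ and $1-\beta$, the values dictated by the scale function $S$ of $X$.

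With these in hand, $V(0)-g(0)\geq(\beta\,g(\delta)+(1-\beta)\,g(-\delta))/\cosh(\theta\delta)-g(0)$. Expanding $g(\pm\delta)=g(0)\pm g'(0\pm)\delta+o(\delta)$ via (g1) and $\cosh(\theta\delta)=1+O(\delta^2)$, the right-hand side equals $(\beta g'(0+)-(1-\beta)g'(0-))\delta+o(\delta)$, so everything reduces to showing the coefficient $\beta g'(0+)-(1-\beta)g'(0-)$ is strictly positive. This is where $\beta>1/2$ and the hypothesis on $g$ enter: if $0\leq g'(0-)<g'(0+)$ then either $g'(0-)=0$ and $\beta g'(0+)>0$, or $g'(0-)>0$ and $\beta g'(0+)>\beta g'(0-)>(1-\beta)g'(0-)$; and if $0<g'(0-)\leq g'(0+)$ then $\beta g'(0+)\geq\beta g'(0-)>(1-\beta)g'(0-)$. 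Thus for small $\delta>0$ the bound is strictly positive, giving $V(0)>g(0)$, i.e. $0\in C$. The only genuinely delicate point I foresee is verifying the closed forms for $p_\delta^{\pm}$ and, in particular, that the discounting loss $1-1/\cosh(\theta\delta)$ is of order $\delta^2$ — so it does not swamp the order-$\delta$ gain created by the asymmetry $\beta\neq1-\beta$; the lack of smoothness of $g$ at the skew point is immaterial since only its one-sided derivatives appear.
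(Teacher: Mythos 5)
Your argument is correct, and it takes a genuinely different route from the paper. You prove $0\in C$ directly, by exhibiting a concrete suboptimal strategy — stop at the first exit from $(-\delta,\delta)$ — and computing its discounted payoff in closed form; your formulas $p_\delta^{+}=\beta/\cosh(\theta\delta)$ and $p_\delta^{-}=(1-\beta)/\cosh(\theta\delta)$ do follow from the two-sided exit expressions with \eqref{increasing}--\eqref{decreasing} (the identity $(2\beta-1)^2+4\beta(1-\beta)=1$ collapses the Wronskian-type denominator exactly as you anticipate), and the first-order expansion then reduces everything to $\beta g'(0+)>(1-\beta)g'(0-)$, with the discounting loss only $O(\delta^2)$. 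The paper instead argues by contradiction from general potential theory: if $0\in\Gamma$ then $V=g$ at $0$ with $V\geq g$ nearby, so the one-sided scale derivatives of the excessive function $V$ would satisfy $\frac{d^-V}{dS}(0)-\frac{d^+V}{dS}(0)\leq(1-\beta)g'(0-)-\beta g'(0+)<0$, contradicting the inequality $\frac{d^-h}{dS}\geq\frac{d^+h}{dS}$ valid for every $r$-excessive $h$ (concavity in scale). Note that both proofs pivot on the same inequality $\beta g'(0+)>(1-\beta)g'(0-)$, i.e.\ the asymmetry of $S$ at the skew point. What each buys: the paper's proof uses no process-specific formulas beyond the scale function, which is why their Remark extends the conclusion to general skew diffusions verbatim; your proof relies on the explicit SBM expressions for $\psi_r,\varphi_r$, so it is less portable as written, but it is more elementary and constructive — it exhibits an explicit strategy beating immediate stopping and quantifies the gain (order $\delta$) against the discounting cost (order $\delta^2$), which also makes transparent why the result holds for every $r>0$.
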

\begin{proof}
Since $\psi_r$ and $\varphi_r$ are differentiable everywhere with
respect to the scale function $S$ it follows that any $r$-excessive
function $h$ has the left and the right
scale derivatives $d^-h/dS$ and $d^+h/dS$, respectively, and these satisfy for all $x$ (cf. Corollary 3.7 in \cite{salminen})
\begin{align}
\frac{d^-h}{dS}(x)\geq \frac{d^+h}{dS}(x).\label{scde}
\end{align}
Let $V$ be the value function defined in \eqref{osp} and recall that $V$ is the smallest $r$-excessive majorant of $g$. Assume now that
$0\in \Gamma$. Then $V(0)=g(0)$ and since $V(x)\geq g(x)$ for all $x\in \mathbb{R}$ we have for $\delta>0$
$$
\frac{V(0)-V(-\delta)}{S(0)-S(-\delta)}\leq \frac{g(0)-g(-\delta)}{S(0)-S(-\delta)}.
$$
Letting $\delta\downarrow 0$ yields
$$
\frac{d^-V}{dS}(0)\leq (1-\beta)g'(0-).
$$
Similarly, for $\delta>0$
$$
\frac{V(\delta)-V(0)}{S(\delta)-S(0)}\geq \frac{g(\delta)-g(0)}{S(\delta)-S(0)}
$$
leading, when letting $\delta\downarrow 0,$ to
$$
\frac{d^+V}{dS}(0)\geq \beta g'(0+).
$$
Therefore, using the assumptions on $g$,
$$
\frac{d^-V}{dS}(0)-\frac{d^+V}{dS}(0)\leq (1-\beta)g'(0-) - \beta g'(0+) \leq (1-2\beta)g'(0+)<0
$$
since $\beta>1/2$. But this contradicts \eqref{scde} and, hence, $0\not\in\Gamma$.
\end{proof}

\begin{remark}
1. In the proof of Proposition \ref{l1} we do not rely on particular properties of SBM and, therefore, the conclusions can be extended to all appropriately defined general skew diffusions.\\
2. The conclusions of Proposition \ref{l1} could alternatively be proved by investigating the behavior of the ratio
$$
u_\lambda(x):=\frac{g(x)}{\lambda\psi_r(x)+(1-\lambda)\varphi_r(x)},
$$
where $\lambda\in[0,1]$. By Theorem 2.1 in \cite{ChIr} $0\in\Gamma$ if and only if there exists a $\lambda\in[0,1]$ such that $0\in\argmax\{u_\lambda(x)\}$. Assuming that this is the case implies that $u_\lambda'(0+)\leq 0 \leq u_\lambda'(0-)$ which can be shown to coincide with the requirement $\beta g'(0+)\leq (1-\beta)g'(0-)$. Noticing that this inequality cannot be satisfied under the conditions of Proposition \ref{l1} demonstrates that $0\in C$ as claimed.
\end{remark}

Proposition \ref{l1} essentially states that if the exercise payoff is increasing in some small open neighborhood of the origin, then the skew point is always included into the continuation region. Put somewhat differently, the directional predictability of the underlying process generates incentives to wait in a neighborhood of the skew point whenever the exercise reward is locally increasing at the state where the underlying process has more tendency to move upwards instead of moving downwards. Since upward movements are in the present setting more favorable from the perspective of the decision maker, waiting becomes optimal even in cases where exercising would be optimal in the absence of skewness. This is an interesting and nontrivial property generated by the singularity of the process at the origin.

The key comparative static properties of the value and optimal exercise strategy are given in the following
\begin{proposition}\label{comparative}
The value function $V$ is non-decreasing as a function of $\beta$ and non-increasing as a function of $r$. Consequently, higher skewness (discounting) expands (contracts) or leaves unchanged the continuation region. In particular, the value function of the OSP for SBM with $\beta>1/2$ dominates the value of the corresponding OSP for standard BM $\{W_t\}_{t\geq 0}$, i.e.,
\begin{align}
V(x)\geq J(x):=\sup_{\tau\in \mathcal{T}} \mathbb{E}\left[\textrm{e}^{-r\tau}g(x+W_\tau)\right]\label{benchmark}
\end{align}
and, therefore, $\{x:J(x)>g(x)\}\subset C = \{x:V(x)>g(x)\}$.
\end{proposition}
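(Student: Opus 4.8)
The plan is to treat the two monotonicity statements separately; the dependence on $r$ is elementary, while the dependence on $\beta$ is the substantive part and contains the comparison \eqref{benchmark} with standard Brownian motion as the special case $\beta=\tfrac12$. For the $r$-monotonicity I would simply note that the process $X$ in \eqref{skew} does not depend on $r$, so for $0<r_1\le r_2$ the pathwise inequality $\mathrm{e}^{-r_1\tau}\ge\mathrm{e}^{-r_2\tau}$ and the non-negativity of $g$ give $\mathbb{E}_x[\mathrm{e}^{-r_1\tau}g(X_\tau)]\ge\mathbb{E}_x[\mathrm{e}^{-r_2\tau}g(X_\tau)]$ for every $\tau\in\mathcal{T}$; taking suprema shows that $V$ is non-increasing in $r$, and the inclusion of the corresponding continuation regions is immediate since each value function dominates $g$. (On $\{\tau=\infty\}$ the convention causes no trouble, because recurrence of $X$ together with (g2) forces $\mathrm{e}^{-rt}g(X_t)\to 0$ almost surely.)

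For the dependence on $\beta$ the plan is to build, on a common probability space, a monotone coupling of two skew Brownian motions $X^{\beta_1}$ and $X^{\beta_2}$ with $0\le\beta_1\le\beta_2\le 1$, both started at $x$. Using the standard excursion-theoretic construction of SBM, take a reflecting Brownian motion $R$ from $|x|$, attach the sign $\textrm{sgn}(x)$ to the excursion straddling time $0$, and attach to every later excursion $e$ away from the origin an independent uniform variable $U_e$ on $[0,1]$; declaring excursion $e$ positive precisely when $U_e<\beta_i$ produces an SBM$(\beta_i)$ started at $x$ for $i=1,2$. Since $\{U_e<\beta_1\}\subseteq\{U_e<\beta_2\}$ and $R\ge 0$, the paths satisfy $X^{\beta_1}_t\le X^{\beta_2}_t$ for all $t\ge 0$ almost surely, and, $g$ being non-decreasing, $\mathrm{e}^{-r\tau}g(X^{\beta_1}_\tau)\le\mathrm{e}^{-r\tau}g(X^{\beta_2}_\tau)$ for every stopping time $\tau$ (with the $\limsup$ convention taking care of $\{\tau=\infty\}$). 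Writing $V^{\beta}$ for the value \eqref{osp} with skewness parameter $\beta$, and letting $\mathcal{G}$ be the filtration generated jointly by $X^{\beta_1}$ and $X^{\beta_2}$, any stopping time of the natural filtration of $X^{\beta_1}$ is a $\mathcal{G}$-stopping time, so
$$\mathbb{E}_x\big[\mathrm{e}^{-r\tau}g(X^{\beta_1}_\tau)\big]\le\mathbb{E}_x\big[\mathrm{e}^{-r\tau}g(X^{\beta_2}_\tau)\big]\le V^{\beta_2}(x),$$
the second inequality holding because the extra information in $\mathcal{G}$ relative to the natural filtration of $X^{\beta_2}$ is merely an independent randomization of the signs of the positive excursions, so that $(X^{\beta_2},\mathcal{G})$ remains a Markov process with the skew Brownian semigroup and the optimal stopping value over $\mathcal{G}$-stopping times equals $V^{\beta_2}(x)$. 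Taking the supremum over $\tau$ gives $V^{\beta_1}\le V^{\beta_2}$, i.e. $V$ is non-decreasing in $\beta$; the choice $\beta_1=\tfrac12$ gives $V\ge V^{1/2}=J$, which is \eqref{benchmark}, and the stated inclusions of continuation regions, including $\{J>g\}\subset C$, follow exactly as in the $r$-case.

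I expect the step needing the most care to be the second inequality in the display: one has to verify that enlarging the natural filtration of $X^{\beta_2}$ to $\mathcal{G}$ does not raise the optimal stopping value, i.e. that the additional information in $\mathcal{G}_t$ is conditionally independent of the future of $X^{\beta_2}$ given $X^{\beta_2}_t$, so that $(X^{\beta_2},\mathcal{G})$ is still Markov with the SBM transition semigroup; granting this, the equality of the two optimal-stopping values is standard. As a check — and an alternative closer in spirit to the proof of Proposition \ref{l1} — one can instead verify directly that $V^{\beta_2}$ is $r$-excessive for $X^{\beta_1}$: on each half-line $(-\infty,0)$ and $(0,\infty)$ the process is ordinary Brownian motion regardless of $\beta$, while at the origin one combines the inequality $(1-\beta_2)(V^{\beta_2})'(0-)\ge\beta_2(V^{\beta_2})'(0+)$ valid for every $r$-excessive function (this is \eqref{scde} in the $\beta_2$-scale, cf. the proof of Proposition \ref{l1}) with $(V^{\beta_2})'(0\pm)\ge 0$ and $\tfrac{\beta_1}{1-\beta_1}\le\tfrac{\beta_2}{1-\beta_2}$ to obtain $(1-\beta_1)(V^{\beta_2})'(0-)\ge\beta_1(V^{\beta_2})'(0+)$; then $V^{\beta_2}$ is an $r$-excessive majorant of $g$ for $X^{\beta_1}$ and hence dominates the smallest such majorant, $V^{\beta_1}$. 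This variant needs the auxiliary fact that $V$ is non-decreasing in the space variable, which follows from the monotone coupling of SBM obtained by running two copies from $x<y$ until they first meet and gluing them thereafter, using path-continuity and the strong Markov property.
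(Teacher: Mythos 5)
Your proposal is correct in substance, but for the main part (monotonicity in $\beta$) it takes a genuinely different route from the paper. The paper's argument is analytic: it uses the explicit transition density \eqref{skewdensity} to show that $\beta\mapsto\mathbb{E}_x[h(X_t)]$ is non-decreasing for every non-decreasing $h$, and then propagates this through Shiryaev's iterative scheme $F_0=g$, $F_{n+1}(x)=\sup_{t\geq 0}\mathbb{E}_x[\mathrm{e}^{-rt}F_n(X_t)]$, whose pointwise limit is $V$; since only deterministic times appear in the iteration, comparing one-dimensional marginals suffices and no filtration issues arise (the price is the implicit use of spatial monotonicity of the iterates and of the explicit density). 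Your argument is probabilistic: a pathwise monotone coupling of SBM$(\beta_1)$ and SBM$(\beta_2)$ through shared excursions of a reflecting Brownian motion and shared uniform sign variables, followed by the observation that stopping times of $X^{\beta_1}$ are stopping times of the joint filtration $\mathcal{G}$ and that the $\mathcal{G}$-value for $X^{\beta_2}$ equals $V^{\beta_2}$. The step you flag is indeed the only delicate one, and it does hold: the pair $(X^{\beta_1},X^{\beta_2})$ is Markov, and given the pair's present the future of $X^{\beta_2}$ depends only on the future excursions of the reflecting motion, their fresh uniforms, and the sign of the excursion in progress, which is read off from $X^{\beta_2}_t$ itself; hence $X^{\beta_2}$ remains (strong) Markov with the SBM$(\beta_2)$ semigroup under $\mathcal{G}$ and the enlargement does not raise the value. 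What your route buys is independence from the explicit density (so it extends directly to more general skew diffusions, in the spirit of the paper's own remarks), and a pathwise explanation of the comparison; what the paper's route buys is the avoidance of the excursion construction and of any enlargement-of-filtration argument. Your alternative variant — transferring $r$-excessivity of $V^{\beta_2}$ to the $\beta_1$-process via the scale-derivative inequality at $0$, using $(V^{\beta_2})'(0\pm)\geq 0$ and $\beta_1/(1-\beta_1)\leq\beta_2/(1-\beta_2)$ — is also sound and is closest in spirit to the paper's Proposition \ref{l1}; note it genuinely needs the auxiliary spatial monotonicity of $V^{\beta_2}$, which your coalescing-coupling sketch (or the ordered flow of solutions of \eqref{skew}) supplies. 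The $r$-monotonicity argument and the deduction of \eqref{benchmark} and the inclusion of continuation regions coincide with the paper's.
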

\begin{proof}
Let $\hat{r}>r>0$ and $\tau\in \mathcal{T}$ be an arbitrary stopping time. The non-negativity of the exercise payoff $g$ then implies that
$$
\mathbb{E}_x\left[\textrm{e}^{-\hat{r}\tau}g(X_{\tau})\right] = \mathbb{E}_x\left[\textrm{e}^{-(\hat{r}-r)\tau-r\tau}g(X_{\tau})\right]\leq \mathbb{E}_x\left[\textrm{e}^{-r\tau}g(X_{\tau})\right]
$$
for all $x\in \mathbb{R}$, demonstrating that increased discounting decreases the value of the optimal policy and, consequently, does not expand the continuation region.

In order to analyze the impact of skewness on the value of the optimal timing policy, we first notice that using \eqref{skewdensity} for a  measurable function $h:\mathbb{R}\mapsto \mathbb{R}$ yields
$$
\mathbb{E}_x\left[h(X_t)\right] = \mathbb{E}\left[h(x+W_t)\right] +(2\beta-1)\int_0^\infty \frac{1}{\sqrt{2\pi t}}\textrm{e}^{-\frac{\left(|x|+y\right)^2}{2t}}(h(y)-h(-y))dy
$$
in case the expectation exist.
Consequently, for a non-decreasing $h$ it holds that
\begin{align}
\frac{\partial}{\partial \beta}\mathbb{E}_x\left[h(X_t)\right] = 2\int_0^\infty \frac{1}{\sqrt{2\pi t}}\textrm{e}^{-\frac{\left(|x|+y\right)^2}{2t}}(h(y)-h(-y))dy \geq 0.\label{partial}
\end{align}
Consider the sequence of functions $\{F_n\}_{n\geq 0}$ defined inductively (cf. \cite{Sh}, pp. 121-122) by
\begin{align*}
F_0(x) &:= g(x)\\
F_{n+1}(x) &:= \sup_{t\geq 0}\mathbb{E}_x\left[\textrm{e}^{-rt}F_n(X_t)\right].
\end{align*}
Then $F_{n+1}(x)\geq F_{n}(x)$ for all $x$ and $n$. Moreover,
$x\mapsto F_n(x)$ is non-decreasing for every $n$ since $g$ is assumed
to be non-decreasing and expectation preserves the ordering. Thus, the
increased skewness does not decrease their expected value by
\eqref{partial}. On the other hand, since $F_n$  converges pointwise
to $V$ (cf. \cite{Sh}, Lemma 5 on p. 121) we notice that the increased skewness increases or leaves unchanged $V$ and, consequently, expands the continuation region. Inequality \eqref{benchmark} follows by setting $\beta=1/2$.
\end{proof}

Proposition \ref{comparative} demonstrates that the sign of the
relationship between the increased skewness and the value of the
optimal exercise strategy is positive. This result is intuitively
clear since it essentially states that the more probable upward
excursions are, the larger is the value of waiting for more favorable
states resulting into a higher payoff. It is worth emphasizing that the
positive skewness is not needed for the positivity of the dependence
of the skewness and the value, and the conclusion is valid whenever $\beta\in [0,1]$. Proposition \ref{comparative} also shows that higher discounting accelerates rational exercise by decreasing the expected present value of future payoffs.

Before stating our main results on the single stopping boundary case,
we introduce for a differentiable function $F$
\begin{align}
\nonumber
(L_\psi
  F)(x)&:=\frac{\psi_r^2(x)}{S'(x)}\frac{d}{dx}\left[\frac{F(x)}{\psi_r(x)}\right]
\\
&=\begin{cases}\label{repinc}
\frac{1}{2}\left(\textrm{e}^{\theta x}(F'(x)-\theta F(x))+(2\beta-1)\textrm{e}^{-\theta x}(F'(x)+\theta F(x))\right), & x>0,\\
(1-\beta)\textrm{e}^{\theta x}(F'(x)-\theta F(x)), &x<0,
\end{cases}
\end{align}
and
\begin{align}
\nonumber
(L_\varphi
F)(x)&:=\frac{\varphi_r^2(x)}{S'(x)}\frac{d}{dx}\left[\frac{F(x)}{\varphi_r(x)}\right]\\
&=\begin{cases}\label{repdec}
\beta \textrm{e}^{-\theta x}(F'(x)+\theta F(x)), & x>0,\\
\frac{1}{2}\left(\textrm{e}^{-\theta x}(F'(x)+\theta F(x))-(2\beta-1)\textrm{e}^{\theta x}(F'(x)-\theta F(x))\right), &x<0.
\end{cases}
\end{align}
Recall that if $F$ is  an $r$-excessive function of $X$ then $L_\psi
F$ and $L_\varphi F$ are associated with the representing measure of
$F$ (for a precise characterization and the integral representation of
excessive functions, see \cite{BS}, p. 33, \cite{salminen}  (3.3) Proposition, and
\cite{SaTa} Theorem 2.4). In the
proofs of Proposition \ref{l1b} and Proposition \ref{thm1} we use the representation theory to verify the
excessivity of the proposed value function.
\begin{proposition}\label{l1b}
(A) Let $x^\ast\in \mathcal{M}$. Then, $(-\infty,x^\ast)\setminus\mathcal{M}\subset C$.\\
\noindent (B) Assume that $\mathcal{M}=\{x^\ast\}$, where $x^\ast>0$, and that in addition to (g1) and (g2) the reward function $g$ has the following properties
\begin{itemize}
  \item[(i)] $g\in C^2([x^\ast,\infty))$ i.e. $g$ is twice continuously differentiable on $[x^\ast,\infty)$,
  \item[(ii)] $g''(x)-2rg(x)\leq 0$ for all $x\geq x^\ast$.
\end{itemize}
Then, $\tau_{x^\ast}=\inf\{t\geq 0:X_t\geq x^\ast\}$ is an optimal stopping time and the value reads as
\begin{align}
V(x) = \mathbb{E}_x\left[{\rm e}^{-r\tau_{x^\ast}}g(X_{\tau_{x^\ast}})\right] = \begin{cases}
g(x), &x\geq x^\ast,\\
\psi_r(x)\frac{g(x^\ast)}{\psi_r(x^\ast)}, &x<x^\ast.
\end{cases}\label{singval}
\end{align}
\end{proposition}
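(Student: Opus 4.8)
The plan is to establish the two claims by different routes: part (A) by a direct lower bound obtained from the hitting time of $x^\ast$, and part (B) by producing an explicit $r$-excessive majorant of $g$ and then appealing to Lemma~\ref{apu}.

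For part~(A), fix $y\in(-\infty,x^\ast)\setminus\mathcal{M}$. Because $X$ has continuous paths and is recurrent, the stopping time $\tau_{x^\ast}=\inf\{t\ge0:X_t\ge x^\ast\}$ is almost surely finite and $X_{\tau_{x^\ast}}=x^\ast$; by the classical Laplace-transform formula for one-dimensional diffusions (see, e.g., \cite{BS}), $\mathbb{E}_y[\mathrm{e}^{-r\tau_{x^\ast}}]=\psi_r(y)/\psi_r(x^\ast)$. Hence
\begin{align*}
V(y)\ \ge\ \mathbb{E}_y\!\left[\mathrm{e}^{-r\tau_{x^\ast}}g(X_{\tau_{x^\ast}})\right]
= g(x^\ast)\frac{\psi_r(y)}{\psi_r(x^\ast)}
= \psi_r(y)\,\sup_{z\in\mathbb{R}}\frac{g(z)}{\psi_r(z)}\ \ge\ g(y),
\end{align*}
and the last inequality is strict precisely because $y\notin\mathcal{M}$; therefore $y\in C$.

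For part~(B), write $\hat{V}$ for the function on the right-hand side of \eqref{singval} and set $c:=g(x^\ast)/\psi_r(x^\ast)$. By the same hitting-time identity, $\hat{V}(x)=\mathbb{E}_x[\mathrm{e}^{-r\tau_{x^\ast}}g(X_{\tau_{x^\ast}})]$ (the stopping being immediate for $x\ge x^\ast$), so by Lemma~\ref{apu} it suffices to show that $\hat{V}$ is $r$-excessive and dominates $g$. The majorization is immediate: $\hat{V}=g$ on $[x^\ast,\infty)$, while on $(-\infty,x^\ast)$ one has $\hat{V}(x)/\psi_r(x)=c=\sup_z g(z)/\psi_r(z)\ge g(x)/\psi_r(x)$ because $x^\ast\in\mathcal{M}$; moreover $\hat{V}$ is continuous and non-negative. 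The crux is therefore the $r$-excessivity of $\hat{V}$, which I would verify through the representation theory recalled above, i.e.\ by showing that the representing measures associated with $L_\psi\hat{V}$ and $L_\varphi\hat{V}$ (cf.\ \eqref{repinc}--\eqref{repdec}) are non-negative, equivalently that $L_\psi\hat{V}$ and $L_\varphi\hat{V}$ are non-increasing. The mechanism is the elementary identity $\frac{d}{dx}(L_\psi F)(x)=\beta\psi_r(x)(F''(x)-2rF(x))$ for $x>0$ (and its $(1-\beta)$-analogue for $x<0$), valid for $F\in C^2$, together with the companion identities for $L_\varphi$. On $(x^\ast,\infty)\subset(0,\infty)$ we have $\hat{V}=g$, which by (i)--(ii) is of class $C^2$ with $\hat{V}''-2r\hat{V}\le 0$, so $L_\psi\hat{V}$ and $L_\varphi\hat{V}$ are non-increasing there; on $(-\infty,0)$ and on $(0,x^\ast)$ we have $\hat{V}=c\psi_r$, which is $r$-harmonic, so $L_\psi\hat{V}$ and $L_\varphi\hat{V}$ are locally constant. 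It remains to control the two interfaces: at the skew point $0$ one uses that $\psi_r$, hence $\hat{V}=c\psi_r$, is differentiable with respect to the scale function $S$ (Section~2, cf.\ also \cite{salminen}), so no jump is created there; at $x^\ast$ the continuity of $\hat{V}$ together with $(g/\psi_r)'(x^\ast+)\le0$ --- which holds because $x^\ast$ is an interior maximizer of $g/\psi_r$, so that $L_\psi\hat{V}(x^\ast+)\le0=L_\psi\hat{V}(x^\ast-)$, and likewise for $L_\varphi\hat{V}$ --- forces the jumps at $x^\ast$ to be non-positive. Collecting these facts, $\hat{V}$ is $r$-excessive, and Lemma~\ref{apu} then yields $V=\hat{V}$, the optimality of $\tau_{x^\ast}$, and its almost sure finiteness.

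The main obstacle is exactly this excessivity verification at the two singular points: one must (a) establish the bridging identity that converts hypothesis (ii) into the monotonicity of $L_\psi\hat{V}$ (and $L_\varphi\hat{V}$) on $(x^\ast,\infty)$, and (b) check that neither the glue condition at the skew point --- handled by the scale-smoothness of $\psi_r$ established in Section~2 --- nor the kink of $\hat{V}$ at $x^\ast$ --- handled by the maximality of $x^\ast$ for $g/\psi_r$ --- introduces a positive atom into the representing measure. Everything else is routine once these two points are settled.
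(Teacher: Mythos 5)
Your proposal is correct and follows essentially the same route as the paper: part (A) via the hitting-time lower bound $V(y)\geq g(x^\ast)\psi_r(y)/\psi_r(x^\ast)>g(y)$, and part (B) by checking that the candidate in \eqref{singval} majorizes $g$ (immediate from $x^\ast\in\mathcal{M}$) and is $r$-excessive through the Martin/representing-measure machinery, i.e.\ the sign and monotonicity of $L_\psi g$ and $L_\varphi g$ on $[x^\ast,\infty)$ (with $\tilde V\propto\psi_r$ contributing no mass left of $x^\ast$, in particular none at the skew point), concluding with Lemma \ref{apu}. The only steps the paper writes out that you leave implicit are the sign condition $(L_\varphi g)(x)\geq 0$ (from $g'+\theta g\geq 0$) and the Wronskian computation giving total mass one, both routine within your scheme.
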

\begin{proof}
(A) Let $x\in (-\infty,x^\ast)\setminus\mathcal{M}$. It is then clear that since $x\not\in \mathcal{M}$
\begin{align}
\label{ie00}
V(x) \geq \mathbb{E}_x\left[\textrm{e}^{-r\tau_{x^\ast}}g(X_{\tau_{x^\ast}})\right] = \psi_r(x)\frac{g(x^\ast)}{\psi_r(x^\ast)}>g(x)
\end{align}
demonstrating that $x\in C$ as well.

(B) Let $\tilde{V}$ denote the proposed value function on the right hand side of \eqref{singval}. Since
\begin{align*}
V(x):=\sup_{\tau\in \mathcal{T}}\mathbb{E}_x\left[\textrm{e}^{-r\tau}g(X_\tau)\right],
\end{align*}
we find that $V\geq \tilde{V}$.

To show that $V=\tilde{V}$ we apply Lemma \ref{apu} and establish that
$\tilde{V}$ is an $r$-excessive majorant of $g$. Since $x^\ast\in
\mathcal{M}$ it is immediate that $\tilde{V}(x)\geq g(x)$ for all
$x\in \mathbb{R}$ (cf. (\ref{ie00})). To show the $r$-excessivity of $\tilde{V}$ we use the representation theory of excessive functions (cf. \cite{salminen}). Let $x_0>x^\ast$ so that $g(x_0)>0$ and define the mapping $H:\mathbb{R}\mapsto \mathbb{R}_+$
as $H(x):=\tilde{V}(x)/\tilde{V}(x_0)=\tilde{V}(x)/g(x_0)$. Moreover, let for $x\geq x_0$
\begin{align}
\label{sigma+}
\sigma_{x_0}^H((x,\infty]):=\frac{\beta \psi_r(x_0)}{\theta g(x_0)}\left(\varphi_r(x)\tilde{V}'(x)-\varphi_r'(x)\tilde{V}(x)\right)
= \frac{\psi_r(x_0)}{\theta g(x_0)}(L_\varphi g)(x)
\end{align}
and for $x\leq x_0$
\begin{align}
\label{sigma-}
\nonumber
\sigma_{x_0}^H([-\infty,x)) &:= \frac{\beta \varphi_r(x_0)}{\theta g(x_0)}\left(\psi_r'(x)\tilde{V}(x) -\psi_r(x)\tilde{V}'(x)\right) \\
&=  \begin{cases}
-\frac{\varphi_r(x_0)}{\theta g(x_0)}(L_\psi g)(x),&x\in (x^\ast,x_0],\\
0,&x\leq x^\ast.
\end{cases}
\end{align}
We now show that these definitions induce a probability measure on $[-\infty,+\infty].$ Firstly, by
the monotonicity and the non-negativity of $g$ we have that
$g'(x)+\theta g(x)\geq 0.$ Hence,  $(L_\varphi g)(x)\geq 0$ for all
$x\geq x^\ast,$ i.e., $\sigma_{x_0}^H((x,\infty])\geq 0$ for all
    $x\geq x_0.$ Moreover, from assumptions (i) and (ii)
\begin{align*}
(L_\varphi g)'(x)&=\left(g''(x)-2rg(x)\right)\varphi_r(x)\beta\leq 0
\end{align*}
for all $x\geq x^\ast$ implying that $x\mapsto
\sigma_{x_0}^H((x,\infty])$ is non-increasing.
Secondly, since $x^\ast\in \mathcal{M}$
we have $(L_\psi g)(x^\ast)=0.$ Assumptions (i) and (ii) guarantee that
\begin{align*}
(L_\psi g)'(x)&=\left(g''(x)-2rg(x)\right)\psi_r(x)\beta\leq 0,
\end{align*}
and, therefore, $(L_\psi g)(x)\leq 0$  for all $x\geq x^\ast,$ i.e.,  $\sigma_{x_0}^H([-\infty,x))\geq 0$ for all
    $x\leq x_0,$ and  $x\mapsto
\sigma_{x_0}^H([-\infty,x))$ is non-decreasing. Thirdly, from the
  definition of the Wronskian we have that
\begin{eqnarray*}
&&\hskip -0.5cm\sigma_{x_0}^H([-\infty,x_0))+\sigma_{x_0}^H((x_0,\infty])\\
&&\hskip 2.2cm =\frac{\psi_r(x_0)}{\theta g(x_0)}\left(\frac{g'(x_0)}{S'(x_0)}\varphi(x_0)-\frac{\varphi'(x_0)}{S'(x_0)}g(x_0)\right)
-\frac{\varphi_r(x_0)}{\theta g(x_0)}\left(\frac{g'(x_0)}{S'(x_0)}\psi(x_0)-\frac{\psi'(x_0)}{S'(x_0)}g(x_0)\right) \\
&& \hskip 2.2cm =\frac{1}{\theta}\left(\frac{\psi'(x_0)}{S'(x_0)}\varphi_r(x_0)-\frac{\varphi_r'(x_0)}{S'(x_0)}\psi(x_0)\right)= 1.
\end{eqnarray*}
Combining now the three steps above and setting
$\sigma_{x_0}^H(\{x_0\})=0$ show  that $\sigma_{x_0}^H$ constitutes a
probability measure on $[-\infty, +\infty]$.   Thus, $\sigma_{x_0}^H$ induces via the Martin representation an $r$-excessive function (cf. \cite{BS}, p. 33 and \cite{salminen}) which coincides with $H$. Since $\tilde{V}(x) = \tilde{V}(x_0)H(x)$
the proposed value $\tilde{V}$ is excessive as well. Invoking Lemma \ref{apu} completes the proof.
\end{proof}

\begin{remark}
1. The conclusions of Part (B) are also valid under the weaker
assumptions:
\begin{itemize}
  \item[(i)] $g\in C^1([x^\ast,\infty)),$
  \item[(ii)]  $L_\varphi g$ and  $L_\psi g$ are non-increasing on $[x^\ast,\infty)$.
\end{itemize}

\noindent
2. In the proof of Proposition \ref{l1b} it is seen that
$\sigma^H_{x_0}$ induces a probability measure on $[-\infty,+\infty].$
In fact, $\sigma_{x_0}^H(\{-\infty\})=0$ and $\sigma_{x_0}^H(\{+\infty\})=0.$
Indeed, the first statement is immediate from (\ref{sigma-}). The
second one follows if $\lim_{x\to +\infty}(L_\varphi g)(x)=0$
(cf. (\ref{sigma+})). To verify this, recall from the proof of
Proposition \ref{l1b} that $(L_\psi g)(x)\leq 0$ for $x>x^*,$ and,
hence,
$$
g'(x) \leq \frac{\psi_r'(x)}{\psi_r(x)}g(x)= \frac{\textrm{e}^{\theta
    x}-(2\beta-1)\textrm{e}^{-\theta x}}{\textrm{e}^{\theta
    x}+(2\beta-1)\textrm{e}^{-\theta x}}\theta g(x)\leq \theta g(x).
$$
Consequently, for  $x\geq x_0$
\begin{align}\label{useful}
(L_\varphi g)(x)=\frac{\beta(g'(x) +\theta g(x))}{\mathrm{e}^{\theta x}}\leq 2\beta\theta  \mathrm{e}^{-\theta x}g(x).
\end{align}
Because $\lim_{x\rightarrow\infty}\mathrm{e}^{-\theta x}\psi_r(x)=1$
and, by assumption, $\lim_{x\rightarrow\infty} g(x)/\psi_r(x) = 0$  we
have $\sigma_{x_0}^H(\{\infty\})=\lim_{x\uparrow
  \infty}\sigma_{x_0}^H((x,\infty]) =0$, as claimed.

\end{remark}

Part (A) of Proposition \ref{l1b} shows how the ratio $g/\psi_r$ can be utilized in the characterization of subsets of the continuation region. An interesting implication of these findings is that $(-\infty,\inf{\mathcal{M}})\subset C$. Hence, if the maximizing threshold $x^\ast$ of the ratio $g/\psi_r$ is negative, unique, an the exercise payoff is increasing and either differentiable or locally convex at the origin, then
the continuation region must necessarily contain both the set $(-\infty,x^\ast)$ as well as an open neighborhood of the origin. This result illustrates nicely the intricacies associated with the singularity of the underlying diffusion at the skew point. As we will later observe, this phenomenon arises even for piecewise linear reward functions.

Part (B) of Proposition \ref{l1b} in turn states a set of conditions under which the general optimal timing problem constitutes a standard single exercise boundary problem where the underlying process is stopped as soon as it hits the critical threshold $x^\ast>0$ at which the ratio $g/\psi_r$ is maximized.  The results of part (B) can naturally be extended to the case where the maximizing threshold is negative, i.e., to the case where $x^\ast < 0$. However, it is clear from Proposition \ref{l1} that in that case the exercise payoff has to be constant in a neighborhood of the skew point since otherwise the origin could not belong to the stopping set.

Our main results on the case where $x^\ast<0$ are now summarized in the following proposition.
\begin{proposition}\label{thm1}
Assume that $\mathcal{M}=\{x^\ast\}$, where $x^\ast<0$, and that in addition to conditions (g1) and (g2) the exercise payoff $g$ satisfies the conditions
\begin{itemize}
  \item[(i)] $g\in C^2([x^\ast,\infty))$,
  \item[(ii)] $(1-\beta)\theta \mathrm{e}^{-\theta x^\ast}g(x^\ast)>\beta g'(0) > 0,$
  \item[(iii)] $g''(x)-2rg(x)< -\varepsilon$ for all $x\geq x^\ast$ and some $\varepsilon>0$.
\end{itemize}
Then, the equation system
\begin{align}
\label{repre}
 \begin{cases}(L_\psi g)(x)=(L_\psi g)(y)&\\
(L_\varphi g)(x)=(L_\varphi g)(y)&\\
 \end{cases}
\end{align}
has a unique solution $\mathbf{y}^\ast =(y_1^\ast,y_2^\ast)$ such that $\mathbf{y}^\ast\in (x^\ast,0)\times(0,\infty)$. Moreover,
$\tau^\ast=\inf\{t\geq 0:X_t\in A\}$ with $A = [x^\ast,y_1^\ast]\cup[y_2^\ast,\infty)$ is the optimal stopping time, and the value reads as
\begin{align}\label{value}
V(x) = \begin{cases}
g(x), &x\in [x^\ast,y_1^\ast]\cup[y_2^\ast,\infty),\\
g(x^\ast)\frac{\psi_r(x)}{\psi_r(x^\ast)}, &x\in (-\infty,x^\ast),\\
g(y_1^\ast)\;\mathbb{E}_x\left[\mathrm{e}^{-r\hat{\tau}_{y_1^\ast}};\hat{\tau}_{y_1^\ast}<\hat{\tau}_{y_2^\ast}\right]&\\
\hskip1cm+
g(y_2^\ast)\;\mathbb{E}_x\left[\mathrm{e}^{-r\hat{\tau}_{y_2^\ast}};\hat{\tau}_{y_2^\ast}<\hat{\tau}_{y_1^\ast}\right],&x\in(y_1^\ast,y_2^\ast),
\end{cases}
\end{align}
where
\begin{align*}
\mathbb{E}_x\left[\mathrm{e}^{-r\hat{\tau}_{y_1^\ast}};\hat{\tau}_{y_1^\ast}<\hat{\tau}_{y_2^\ast}\right]&=\frac{\varphi_r(x)\psi_r(y_2^\ast)-\psi_r(x)\varphi_r(y_2^\ast)}
{\psi_r(y_2^\ast)\varphi_r(y_1^\ast)-\varphi_r(y_2^\ast)\psi_r(y_1^\ast)}\\
\mathbb{E}_x\left[\mathrm{e}^{-r\hat{\tau}_{y_2^\ast}};\hat{\tau}_{y_2^\ast}<\hat{\tau}_{y_1^\ast}\right]&=\frac{\psi_r(x)\varphi_r(y_1^\ast)-\varphi_r(x)\psi_r(y_1^\ast)}
{\psi_r(y_2^\ast)\varphi_r(y_1^\ast)-\varphi_r(y_2^\ast)\psi_r(y_1^\ast)}.
\end{align*}
\end{proposition}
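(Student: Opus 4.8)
The proof proceeds in two stages: first one pins down the pair $\mathbf y^\ast$ by a monotonicity analysis of $L_\psi g$ and $L_\varphi g$ on $[x^\ast,\infty)$, and then one verifies, in the spirit of the proof of Proposition \ref{l1b}, that the function $\tilde V$ defined by the right-hand side of \eqref{value} is the smallest $r$-excessive majorant of $g$, after which Lemma \ref{apu} applies.

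For the first stage, differentiating \eqref{repinc}--\eqref{repdec} gives, for $x\neq 0$,
$$(L_\psi g)'(x)=\kappa(x)\big(g''(x)-2rg(x)\big)\psi_r(x),\qquad (L_\varphi g)'(x)=\kappa(x)\big(g''(x)-2rg(x)\big)\varphi_r(x),$$
with $\kappa(x)=\beta$ for $x>0$ and $\kappa(x)=1-\beta$ for $x<0$, so that by (iii) both $L_\psi g$ and $L_\varphi g$ are strictly decreasing on $(x^\ast,0)$ and on $(0,\infty)$. Moreover $(L_\psi g)(x^\ast)=0$ (the first-order condition for the maximum of $g/\psi_r$ at $x^\ast$), $(L_\varphi g)(x^\ast)=\theta g(x^\ast)/\psi_r(x^\ast)>0$ (from the Wronskian identity $(L_\psi g)\varphi_r-(L_\varphi g)\psi_r\equiv-\theta g$), both $L_\psi g$ and $L_\varphi g$ jump upward by $(2\beta-1)g'(0)>0$ at the skew point, $\lim_{x\to\infty}(L_\varphi g)(x)=0$ (as in the Remark after Proposition \ref{l1b}, using (g2)), and $\lim_{x\to\infty}(L_\psi g)(x)=-\infty$ (by (iii)). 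Hence for every level $\ell$ in the (nonempty) intersection of the ranges of $L_\psi g$ over $(x^\ast,0)$ and over $(0,\infty)$ there are unique $y_1(\ell)\in(x^\ast,0)$ and $y_2(\ell)\in(0,\infty)$ with $(L_\psi g)(y_i(\ell))=\ell$, and \eqref{repre} is equivalent to $\Delta(\ell):=(L_\varphi g)(y_2(\ell))-(L_\varphi g)(y_1(\ell))=0$. Differentiating and using the formulas above, the factors $\kappa(g''-2rg)$ cancel and one gets $\Delta'(\ell)=(\varphi_r/\psi_r)(y_2(\ell))-(\varphi_r/\psi_r)(y_1(\ell))<0$, since $\varphi_r/\psi_r$ is strictly decreasing and $y_1(\ell)<0<y_2(\ell)$. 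Thus $\Delta$ is strictly decreasing and has at most one zero, which yields the uniqueness of $\mathbf y^\ast$; I expect the main obstacle to be the existence, that is, showing that $\Delta$ changes sign across the admissible range of levels. This is exactly where condition (ii) is used: at one endpoint the sign follows from the strict maximality of $g/\psi_r$ at the single point $x^\ast$, while at the other it requires a careful comparison of the values of $L_\varphi g$ at $x^\ast$, at $0\pm$, and at the $L_\psi g$-preimages of $(L_\psi g)(0\pm)$, for which (ii) is tailored.

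For the second stage, put $A=[x^\ast,y_1^\ast]\cup[y_2^\ast,\infty)$ and note that on $(y_1^\ast,y_2^\ast)$ the function $\tilde V$ equals $c_1\psi_r+c_2\varphi_r$ with $c_1=\theta^{-1}(L_\varphi g)(y_1^\ast)>0$ and $c_2=-\theta^{-1}(L_\psi g)(y_1^\ast)>0$, and that \eqref{repre} precisely says that $\tilde V$ matches $g$ in value and first derivative at $y_1^\ast$ and $y_2^\ast$. To verify $\tilde V\geq g$ it suffices to consider the band $(y_1^\ast,y_2^\ast)$, where $w:=\tilde V-g$ solves $w''-2rw=2rg-g''>0$ on $(y_1^\ast,0)\cup(0,y_2^\ast)$ and $w(y_i^\ast)=w'(y_i^\ast)=0$; hence $L_\psi w$ vanishes at $y_1^\ast,y_2^\ast$, is strictly increasing on $(y_1^\ast,0)$ and on $(0,y_2^\ast)$, and jumps downward by $(2\beta-1)g'(0)$ at $0$, which forces $L_\psi w>0$ on $(y_1^\ast,0)$ and $L_\psi w<0$ on $(0,y_2^\ast)$, so that $w/\psi_r$ first increases from $0$ and then decreases back to $0$ and therefore $w>0$ on $(y_1^\ast,y_2^\ast)$. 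On $(-\infty,x^\ast)$ the inequality $\tilde V\geq g$ is the maximality of $g/\psi_r$ at $x^\ast$, and on $A$ it is an equality.

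Finally, to show that $\tilde V$ is $r$-excessive I would reproduce the Martin-representation argument of Proposition \ref{l1b}: fix $x_0>y_2^\ast$, set $H:=\tilde V/\tilde V(x_0)$, and define $\sigma_{x_0}^H((x,\infty])\propto(L_\varphi g)(x)$ for $x\geq x_0$ and $\sigma_{x_0}^H([-\infty,x))\propto-(L_\psi\tilde V)(x)$ for $x\leq x_0$, where $L_\psi\tilde V$ equals $L_\psi g$ on $(x^\ast,y_1^\ast)\cup(y_2^\ast,\infty)$, the constant $-c_2\theta$ on $(y_1^\ast,y_2^\ast)$, and $0$ on $(-\infty,x^\ast)$. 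By the estimates of the second stage together with (iii) these set functions are non-negative and monotone; the identities $(L_\psi g)(x^\ast)=0$ and $-c_2\theta=(L_\psi g)(y_1^\ast)=(L_\psi g)(y_2^\ast)$ make them atom-free at $x^\ast$, $y_1^\ast$ and $y_2^\ast$ (and there is no atom at the skew point because $\tilde V$ is a linear combination of $\psi_r$ and $\varphi_r$ near $0$, hence differentiable with respect to $S$ there); $\lim_{x\to\infty}(L_\varphi g)(x)=0$ removes the mass at $+\infty$ via (g2), $\sigma_{x_0}^H(\{-\infty\})=0$ is immediate, and the Wronskian identity at $x_0$ shows the total mass is one. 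Thus $\sigma_{x_0}^H$ is a probability measure on $[-\infty,+\infty]$ whose Martin representation is an $r$-excessive function equal to $H$, so $\tilde V$ is $r$-excessive. Since $X$ is a regular recurrent diffusion that cannot leave the band $(y_1^\ast,y_2^\ast)$ or the half-line $(-\infty,x^\ast)$ without entering $A$, one checks directly that $\tilde V(x)=\mathbb E_x[\mathrm e^{-r\tau_A}g(X_{\tau_A})]$ --- the expressions for $x<x^\ast$ and for $x\in(y_1^\ast,y_2^\ast)$ being the classical one-sided and two-sided hitting-time formulas written in terms of $\psi_r$ and $\varphi_r$ --- and Lemma \ref{apu} then yields $V=\tilde V$ and the optimality of $\tau^\ast=\tau_A$.
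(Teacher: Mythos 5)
Your verification stage (excessivity via the Martin representation with the measure built from $L_\varphi g$ and $-L_\psi\tilde V$, total mass one via the Wronskian, and domination on the band via the monotonicity of $L_\psi(\tilde V-g)$, i.e.\ of $(\tilde V-g)/\psi_r$) is essentially the paper's argument and is correct; and your uniqueness argument --- parametrizing candidate pairs by the common level $\ell$ of $L_\psi g$ and computing $\Delta'(\ell)=(\varphi_r/\psi_r)(y_2(\ell))-(\varphi_r/\psi_r)(y_1(\ell))<0$ --- is a clean alternative to the paper's intersection of two monotone solution curves. The genuine gap is the existence half of the first assertion: you never show that $\Delta$ actually changes sign on the admissible interval of levels $\bigl((L_\psi g)(0-),\min\{0,(L_\psi g)(0+)\}\bigr)$; you only say you ``expect'' this to be the main obstacle and that (ii) is ``tailored'' for it. But solvability of \eqref{repre} in $(x^\ast,0)\times(0,\infty)$ is one of the main claims of the proposition, and it is exactly where hypothesis (ii) enters. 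In the paper this is done by an explicit boundary comparison: writing the system as in \eqref{repre2} with $h_1=q_1+q_2$, $h_2=q_1-q_2$ and $l=(1-\beta)h_1$ on $(x^\ast,0)$, $l=\beta h_1$ on $(0,\infty)$, one uses $l(0+)-l(0-)=2(2\beta-1)g'(0)>0$ and, crucially, $l(x^\ast)-l(0+)=2\bigl((1-\beta)\theta\mathrm{e}^{-\theta x^\ast}g(x^\ast)-\beta g'(0)\bigr)>0$ (this is (ii)), together with $l\to-\infty$, to place the two monotone solution curves so that they must cross. Without an argument of this type your proof yields at most uniqueness.

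Moreover, your endpoint sketch is not quite right as stated. The sign of $\Delta$ at the upper end of the level range follows from the strict maximality of $g/\psi_r$ at $x^\ast$ only in the sub-case $(L_\psi g)(0+)\geq 0$: then $\ell\uparrow 0$, $y_1(\ell)\to x^\ast$, and the Wronskian identity $L_\psi g\cdot\varphi_r-L_\varphi g\cdot\psi_r=-\theta g$ gives $\Delta\to\theta\bigl(g(y_2)/\psi_r(y_2)-g(x^\ast)/\psi_r(x^\ast)\bigr)<0$. When $(L_\psi g)(0+)<0$ the relevant endpoint is $\ell=(L_\psi g)(0+)$ with $y_2(\ell)\to 0+$ and $y_1(\ell)$ tending to the preimage of $(L_\psi g)(0+)$ in $(x^\ast,0)$, and maximality alone does not settle the sign there; this sub-case must be handled separately, again using (ii) and the upward jumps of size $(2\beta-1)g'(0)$ of $L_\psi g$ and $L_\varphi g$ at the skew point. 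A minor further point: your appeal to the Remark for $\lim_{x\to\infty}(L_\varphi g)(x)=0$ needs the observation that $L_\psi g$ is eventually negative on $(0,\infty)$, which does hold here because $L_\psi g\to-\infty$ by (iii).
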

\begin{proof}
We first establish that equation system \eqref{repre}  has a unique solution $\mathbf{y}^\ast\in (x^\ast,0)\times(0,\infty)$.
In order to accomplish this task, we first observe that \eqref{repre} can be re-expressed by using \eqref{repinc} and \eqref{repdec} as
\begin{align}
\label{repre2}
 \begin{cases}(1-\beta)(q_1(x)+q_2(x))= \beta (q_1(y)+q_2(y))&\\
q_1(x)-q_2(x)= q_1(y)-q_2(y),&\\
 \end{cases}
\end{align}
where  $q_1(x):=\textrm{e}^{\theta x}(g'(x)-\theta g(x))$ and
$q_2(x):=\textrm{e}^{-\theta x}(g'(x)+\theta g(x))$. Consider now the
behavior of the functions $h_1:=q_1+q_2$ and  $h_2:=q_1-q_2.$
Since  $x^\ast<0$ and $(L_\psi g)(x^*)=0$  it follows from
(\ref{repinc})
that $q_1(x^\ast)=0$ and, hence, $h_1(x^\ast)=-h_2(x^\ast)=\mathrm{e}^{-\theta x^\ast}2\theta g(x^\ast) >0$. Moreover, $h_1(0)=2g'(0)>0$, $h_2(0)=-2\theta g(0)<0$, and
\begin{align}
h_1'(x) &=(\mathrm{e}^{\theta x}+\mathrm{e}^{-\theta x})(g''(x)-2rg(x))\label{df1}\\
h_2'(x) &=(\mathrm{e}^{\theta x}-\mathrm{e}^{-\theta x})(g''(x)-2rg(x)).\label{df2}
\end{align}
Our assumption (iii) guarantees that $h_1'(x)<0$ for all $x>x^\ast$. In a completely analogous fashion we find that
$
h_2'(x)<0
$ for $x>0$ and $h_2'(x)>0$ for $x\in (x^\ast,0)$. Moreover, if $x>z>0$ then applying the standard mean value theorem yields
\begin{align*}
h_1(x)-h_1(z) &= \int_z^x(\mathrm{e}^{\theta t}+\mathrm{e}^{-\theta t})(g''(t)-2rg(t))dt\\ &=\frac{(g''(\xi)-2rg(\xi))}{\theta}\left[(\mathrm{e}^{\theta x}-\mathrm{e}^{-\theta x})-(\mathrm{e}^{\theta z}-\mathrm{e}^{-\theta z})\right]
\end{align*}
demonstrating that $\lim_{x\rightarrow\infty}h_1(x)=-\infty$. In an analogous way we find that $\lim_{x\rightarrow\infty}h_2(x)=-\infty$ as well.
Consider now for a given $x\in[x^\ast,0]$ equation $h_2(\tilde{y}_x)=h_2(x)$ where $\tilde{y}_x\in[0,\infty)$. The continuity of $h_2(x)$ at the origin implies that for $x=0$ we have $\tilde{y}_0=0$. Utilizing \eqref{df2}, in turn, implies that for all $x\in (x^\ast,0)$ there is a unique $\tilde{y}_x\in (0, \tilde{y}_{x^\ast})$ satisfying $h_2(\tilde{y}_x)=h_2(x)$ (since $h_2(x)\downarrow -\infty$ as $x\uparrow\infty$). Implicit differentiation yields
$$
\tilde{y}_x' = \frac{h_2'(x)}{h_2'(\tilde{y}_x)} < 0.
$$
Consider next  for a given $x\in[x^\ast,0]$ equation $l(x)=l(\hat{y}_x)$, where
$$
l(x)=\begin{cases}
\beta h_1(x),&x>0,\\
(1-\beta) h_1(x),&x<0,
\end{cases}
$$
and $\hat{y}_x\in[0,\infty)$.
The monotonicity of $h_1(x)$ implies that $l(x)$ is monotonically decreasing on $(x^\ast,0)\cup(0,\infty)$. Moreover, since
$$
l(0+)-l(0-) =\beta h_1(0)-(1-\beta) h_1(0) = (2\beta-1)2g'(0)>0,
$$
$$
l(x^\ast)-l(0+) = 2((1-\beta)\theta \mathrm{e}^{-\theta x^\ast}g(x^\ast)-\beta g'(0)) > 0,
$$
and $l(x)\downarrow -\infty$ as $x\uparrow\infty$
we notice that there exists necessarily a unique $\hat{x}\in (x^\ast,0)$ such that $l(\hat{x})=l(0+)$ and, consequently, such that $\hat{y}_{\hat{x}}=0$. On the other hand, since $l(x)<0$ for $x>l^{-1}(0)$ we notice that there is a unique $\hat{y}_0\in (0,l^{-1}(0))$ such that $l(\hat{y}_0)=l(0-)$. Moreover, implicit differentiation yields
$$
\hat{y}_x'=\frac{(1-\beta)h_1'(x)}{\beta h_1'(\hat{y}_x)} > 0.
$$
Combining these findings show that $\tilde{y}_0=0<\hat{y}_0$ and
$\tilde{y}_{x^\ast}>\tilde{y}_{\hat{x}}>0=\hat{y}_{\hat{x}}$. The
continuity and the monotonicity of the solution curves $x\mapsto \tilde{y}_x$ and
$x\mapsto \hat{y}_x,\  x\in(x^*,0)$ then proves that they have a unique interception point
$x^{**}\in(\hat{x},0)$ such that $\tilde{y}_{x^{**}}=\hat{y}_{x^{**}}$
and, consequently, such that \eqref{repre2} holds.

We now prove that \eqref{value} constitutes the value and $\tau^\ast$
the optimal stopping strategy of \eqref{osp}. To this end, let
$\tilde{V}$ denote the proposed value function on the right hand side
of \eqref{value} with $y^*_1:=x^{**}$ and $y^*_2:=\tilde{y}_{x^{**}}=\hat{y}_{x^{**}}.$ It  is again clear that $V\geq \tilde{V}$. In order to prove the opposite inequality, we first notice that $\tilde{V}$ is continuous and non-negative. To demonstrate that $\tilde{V}$ is $r$-excessive, we let $x_0>y_2^\ast$ and define the mapping $\hat{H}:\mathbb{R}\mapsto \mathbb{R}_+$
as $\hat{H}(x):=\tilde{V}(x)/\tilde{V}(x_0)=\tilde{V}(x)/g(x_0)$. As in the proof of Proposition \ref{l1b}, define for $x\geq x_0$
\begin{align*}
\sigma_{x_0}^{\hat{H}}((x,\infty]):= \frac{\psi_r(x_0)}{\theta g(x_0)}(L_\varphi g)(x)
\end{align*}
and for $x\leq x_0$
\begin{align*}
\sigma_{x_0}^{\hat{H}}([-\infty,x)) &:= \frac{\varphi_r(x_0)}{\theta
    g(x_0)}\left(\tilde V(x)\frac{d^-\psi_r}{dS}(x) -\psi_r(x)\frac{d^-\tilde
    V}{dS}(x)\right) \\
&=  \begin{cases}
-\frac{\varphi_r(x_0)}{\theta g(x_0)}(L_\psi g)(x),&x\in (y_2^\ast,x_0],\\
-\frac{\varphi_r(x_0)}{\theta g(x_0)}(L_\psi g)(y_1^\ast), &x\in(y_1^\ast,y_2^\ast],\\
-\frac{\varphi_r(x_0)}{\theta g(x_0)}(L_\psi g)(x), &x\in(x^\ast,y_1^\ast],\\
0,&x\in(-\infty, x^\ast],
\end{cases}
\end{align*}
where the identity $(L_\psi g)(y_1^\ast)=(L_\psi g)(y_2^\ast)$ is used.
We now show that these definitions induce a probability measure
on $[-\infty,+\infty]$. Firstly,
the monotonicity and the non-negativity of the exercise payoff $g$
imply that $g'(x)+\theta g(x)>0$ and, therefore,  from \eqref{repdec}
$(L_\varphi g)(x)\geq 0$ for all $x\geq y_2^\ast,$ i.e.,
$\sigma_{x_0}^{\hat{H}}((x,\infty])\geq 0$ for $x\geq x_0.$ Moreover,
    $$(L_\varphi g)'(x) = \beta (g''(x)-2rg(x))\varphi_r(x)<0$$ for
    all $x\in[x_0,\infty)$ implying that $x\mapsto
      \sigma_{x_0}^{\hat{H}}((x,\infty])$ is non-increasing.
Secondly, since  $(L_\psi g)(x^\ast)=0$ and
\begin{align}
(L_\psi g)'(x) = \begin{cases}
\beta (g''(x)-2rg(x))\psi_r(x),&x\in(0,\infty),\\
(1-\beta) (g''(x)-2rg(x))\psi_r(x),&x\in(x^\ast,0),
\end{cases}\label{mon}
\end{align}
it is seen by applying assumption (iii)  that  $L_\psi g$ is decreasing and negative on
$(x^\ast,\infty).$ Consequently,  $x\mapsto
      \sigma_{x_0}^{\hat{H}}([-\infty,x))$ is non-negative and
        non-decreasing for $x\leq x_0.$ Thirdly, we should check that
$$
      \sigma_{x_0}^{\hat{H}}([-\infty,x_0))+ \sigma_{x_0}^{\hat{H}}([x_0,+\infty])=1,
$$
but this follows similarly as in the proof of Proposition \ref{l1b}
exploiting the Wronskian relationship. This concludes the proof that
$\sigma_{x_0}^{\hat{H}}$ constitutes a probability measure on
$[-\infty,+\infty].$  The probability measure  $\sigma_{x_0}^{\hat{H}}$ induces via the Martin representation an $r$-excessive function (cf. \cite{BS}, p. 33 and \cite{salminen}) which coincides with $\hat{H}$. Since $\tilde{V}(x) = \tilde{V}(x_0)\hat{H}(x)$ we find that
the proposed value $\tilde{V}(x)$ is $r$-excessive as well.

It remains to prove that $\tilde{V}$ dominates the exercise payoff $g$. It is clear that $\tilde{V}\geq g$ for all $x\in(-\infty, y_1^\ast]\cup[y_2^\ast,\infty)$.
It is, thus, sufficient to analyze the difference
$\Delta(x):=\tilde{V}(x)-g(x)$ on $(y_1^\ast, y_2^\ast)$. Notice that
$\Delta(y_1^\ast)=\Delta(y_2^\ast)=0$. Applying formula (3.4) in \cite{salminen} where we choose $x_0=y_2^\ast$ results in
\begin{align*}
\frac{\tilde{V}(x)}{\tilde{V}(x_0)}=\frac{\sigma^{\hat{H}}_{x_0}([-\infty,x))}{\varphi_r(y_2^\ast)}\varphi_r(x)+
\frac{\sigma^{\hat{H}}_{x_0}((x,\infty])}{\psi_r(y_2^\ast)}\psi_r(x),\quad x\in(y_1^\ast,y_2^\ast).
\end{align*}
Since $\sigma^{\hat{H}}_{x_0}([y^*_1,y^*_2])=0$ this expression
simplifies and yields
\begin{align*}
\tilde{V}(x)&=g(y_2^\ast)\left(-\frac{\varphi_r(y_2^\ast)}{\theta g(y_2^\ast)}(L_\psi g)(y_2^\ast)\right)\frac{\varphi_r(x)}{\varphi_r(y_2^\ast)}+
g(y_2^\ast)\left(\frac{\psi_r(y^\ast)}{\theta g(y_2^\ast)}(L_\varphi g)(y_2^\ast)\right)\frac{\psi_r(x)}{\psi_r(y_2^\ast)}\\
&= -\frac{(L_\psi g)(y_2^\ast)}{\theta }\varphi_r(x)+
\frac{(L_\varphi g)(y_2^\ast)}{\theta}\psi_r(x).
\end{align*}
Moreover, utilizing \eqref{mon}, assumption (iii),  and noticing that
\begin{align*}
\frac{d}{dx}\left[\frac{\Delta(x)}{\psi_r(x)}\right] = \frac{S'(x)}{\psi_r^2(x)}\left((L_\psi g)(y_i^\ast)-(L_\psi g)(x)\right)
\end{align*}
for $x\in(y_1^\ast,0)\cup(0,y_2^\ast)$ show that $\Delta/\psi_r$ is increasing on $(y_1^\ast,0)$ and, consequently, that $\Delta(x)>0$ for all $x\in(y_1^\ast,0)$. In an completely analogous fashion, we find that $\Delta(x)/\psi_r(x)$ is decreasing for $x\in(0,y_2^\ast)$ and, therefore, that  $\Delta(x)>0$ for all $x\in(0, y_2^\ast)$ as well. The continuity of $\Delta$ then proves that $\Delta(x)>0$ for all $x\in(y_1^\ast,y_2^\ast)$ and, consequently, that the proposed value $\tilde{V}$ dominates the exercise payoff $g$.

We may now evoke Lemma \ref{apu} to complete the proof of the proposition,

\end{proof}

\begin{remark}
The conclusions of Proposition \ref{thm1} are derived from the general properties of excessive mappings and their representing measures and as such do not require detailed process specific information besides the singularity at the skew point and the generator of the driving process. In that respect, the developed proof applies even under more general circumstances than in the SBM setting.
\end{remark}

As the proof of Proposition \ref{thm1} indicates, there are circumstances under which the problem can be reduced into a two boundary problem where the lower boundary $x^\ast=y_1^\ast$ constitutes a tangency point of the value. A set of conditions under which this observation is true are stated in the following corollary.
\begin{corollary}\label{tangeeraus}
Assume that $\mathcal{M}=\{x^\ast,y_2^\ast\}$, where $x^\ast<0<y_2^\ast$. Assume also that conditions (i) - (iii) of Proposition \ref{thm1} are satisfied. Then
$\Gamma = \{x^\ast\}\cup[y_2^\ast,\infty)$, $C=(-\infty,x^\ast)\cup(x^\ast,y_2^\ast)$, and the value is
\begin{align}
\label{tan11}
V(x) = \begin{cases}
g(x), &x\in \{x^\ast\}\cup[y_2^\ast,\infty),\\
\psi_r(x)\frac{g(y_2^\ast)}{\psi_r(y_2^\ast)}, &x\in (-\infty,x^\ast)\cup(x^\ast,y_2^\ast).
\end{cases}
\end{align}
\end{corollary}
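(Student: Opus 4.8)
The plan is to view this corollary as the degenerate limiting case of Proposition \ref{thm1} in which the lower connected component of the stopping region $[x^\ast,y_1^\ast]$ collapses to the single point $\{x^\ast\}$, i.e.\ $y_1^\ast = x^\ast$. First I would observe that since $\mathcal{M}=\{x^\ast,y_2^\ast\}$, both $x^\ast$ and $y_2^\ast$ are maximizers of $g/\psi_r$, so $(L_\psi g)(x^\ast)=(L_\psi g)(y_2^\ast)=0$; this is exactly the first equation of system \eqref{repre} with $x=x^\ast$, $y=y_2^\ast$. It remains to check that the second equation $(L_\varphi g)(x^\ast)=(L_\varphi g)(y_2^\ast)$ also holds for this pair. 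This should follow from the Wronskian identity together with $(L_\psi g)(x^\ast)=(L_\psi g)(y_2^\ast)=0$: indeed, for any point $z$ one has, from the definitions \eqref{repinc}--\eqref{repdec} and the normalization of the Wronskian, a linear relation expressing $(L_\varphi g)(z)$ in terms of $(L_\psi g)(z)$, $g(z)$, $\psi_r(z)$ and $\varphi_r(z)$ of the form exploited in the displayed Wronskian computation inside the proof of Proposition \ref{l1b}; evaluating the resulting relation at $x^\ast$ and $y_2^\ast$ and using that the $(L_\psi g)$-terms vanish yields equality of the $(L_\varphi g)$-values. Hence $(x^\ast,y_2^\ast)$ solves \eqref{repre}, and by the uniqueness part of Proposition \ref{thm1} (whose hypotheses (i)--(iii) are assumed) we must have $y_1^\ast=x^\ast$ and $y_2^\ast$ as in the statement.

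Once $y_1^\ast=x^\ast$ is identified, I would substitute this into the value formula \eqref{value} of Proposition \ref{thm1}. On $(-\infty,x^\ast)$ the formula already reads $g(x^\ast)\psi_r(x)/\psi_r(x^\ast)$. On the degenerate interval $(y_1^\ast,y_2^\ast)=(x^\ast,y_2^\ast)$, the two-sided exit expression collapses: since the lower boundary coincides with $x^\ast$ and $x^\ast\in\mathcal{M}$ means $g(x^\ast)/\psi_r(x^\ast)=g(y_2^\ast)/\psi_r(y_2^\ast)$, the linear combination $g(y_1^\ast)\mathbb{E}_x[\mathrm{e}^{-r\hat\tau_{y_1^\ast}};\hat\tau_{y_1^\ast}<\hat\tau_{y_2^\ast}] + g(y_2^\ast)\mathbb{E}_x[\mathrm{e}^{-r\hat\tau_{y_2^\ast}};\hat\tau_{y_2^\ast}<\hat\tau_{y_1^\ast}]$ simplifies, using the explicit hitting-probability formulas given in Proposition \ref{thm1}, to $\psi_r(x)\,g(y_2^\ast)/\psi_r(y_2^\ast)$. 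This is a short and purely algebraic computation: factor $g(y_2^\ast)/\psi_r(y_2^\ast)$ out of both terms after writing $g(x^\ast)=\psi_r(x^\ast)g(y_2^\ast)/\psi_r(y_2^\ast)$, and the two $\mathbb{E}_x[\cdots]$ weights combine with $\psi_r(x^\ast)$ and $\psi_r(y_2^\ast)$ so that the $\varphi_r(x)$-terms cancel, leaving $\psi_r(x)$. So \eqref{value} reduces exactly to \eqref{tan11}, and the stopping region $\Gamma=\{V=g\}$ becomes $\{x^\ast\}\cup[y_2^\ast,\infty)$ with $C$ its complement; that $x^\ast$ is an isolated point of $\Gamma$ (a tangency point) is clear because on both sides $V$ equals the multiple $\psi_r(\cdot)g(x^\ast)/\psi_r(x^\ast)$ of $\psi_r$ which strictly exceeds $g$ except at $x^\ast$, by the argument in \eqref{ie00} and the uniqueness of the maximizer structure.

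Finally I would note that optimality of $\tau^\ast=\inf\{t\ge0:X_t\in\{x^\ast\}\cup[y_2^\ast,\infty)\}$ and the identity $V=\tilde V$ are inherited directly from Proposition \ref{thm1} with $y_1^\ast=x^\ast$: the function in \eqref{tan11} is $r$-excessive (being a pointwise specialization of the $r$-excessive $\tilde V$ constructed there via the Martin representation) and dominates $g$ (shown there via the sign analysis of $\Delta=\tilde V-g$, which in the degenerate case reads $\Delta(x^\ast)=0$ and $\Delta>0$ on $(x^\ast,y_2^\ast)$), so Lemma \ref{apu} applies verbatim. The main obstacle, and the only place where genuine work is required, is the verification that $(x^\ast,y_2^\ast)$ actually satisfies the second equation of \eqref{repre} — i.e.\ that the two hypotheses $\mathcal{M}=\{x^\ast,y_2^\ast\}$ and conditions (i)--(iii) are jointly consistent and force the configuration $y_1^\ast=x^\ast$ rather than some other solution; everything else is bookkeeping and the collapse of already-established formulas. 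One should double-check that the hitting of the single point $x^\ast$ is consistent with $X$ being a regular diffusion (points are regular for themselves for SBM away from nowhere-degenerate behavior, so $\tau^\ast<\infty$ a.s.\ follows as in Lemma \ref{apu} since $\{x^\ast\}\cup[y_2^\ast,\infty)$ is nonempty and $X$ is recurrent).
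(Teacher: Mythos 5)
Your pivotal step is the inference that, since $(x^\ast,y_2^\ast)$ solves \eqref{repre}, the \emph{uniqueness} part of Proposition \ref{thm1} forces $y_1^\ast=x^\ast$; this is a genuine gap. Proposition \ref{thm1} assumes $\mathcal{M}=\{x^\ast\}$ is a singleton, which is precisely what fails under the corollary's hypotheses, so it cannot be cited as a black box; moreover its uniqueness clause concerns solutions in the \emph{open} rectangle $(x^\ast,0)\times(0,\infty)$, while your pair lies on its boundary, so uniqueness there can say nothing about it --- indeed in Proposition \ref{thm1} one has $y_1^\ast\in(x^\ast,0)$ strictly, and if its existence argument were applicable here it would produce a stopping interval $[x^\ast,y_1^\ast]$ with $y_1^\ast>x^\ast$, contradicting the very statement being proved (and part (A) of Proposition \ref{l1b}). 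Worse, the \eqref{repre}-machinery is the wrong tool for this configuration: your Wronskian observation (which is correct, but note it also needs $g(x^\ast)/\psi_r(x^\ast)=g(y_2^\ast)/\psi_r(y_2^\ast)$, not just the vanishing of the $(L_\psi g)$-terms) shows $(x^\ast,y_2^\ast)$ solves \eqref{repre}, hence also \eqref{repre2}, whose first equation reads $(1-\beta)h_1(x^\ast)=\beta h_1(y_2^\ast)$ in the notation of that proof; but condition (ii) together with the monotonicity of $h_1$ under (iii) gives $\beta h_1(y_2^\ast)<\beta h_1(0)=2\beta g'(0)<(1-\beta)h_1(x^\ast)$ whenever $(L_\psi g)(x^\ast)=0$. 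So condition (ii) is in direct tension with the two-point assumption on $\mathcal{M}$ (in the paper's linear example the two-point case $r=\hat r$ occurs exactly when (ii) fails, since $\hat\theta<\theta^\ast$), and an argument that leans on (ii) through the existence/uniqueness analysis of \eqref{repre} cannot identify the optimal configuration here. In short, the step you yourself flag as ``the only place where genuine work is required'' is left unproved.

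The intended argument bypasses \eqref{repre} entirely. Since $x^\ast,y_2^\ast\in\mathcal{M}$ have the same maximal ratio, the function in \eqref{tan11} is exactly \eqref{singval} with the threshold $x^\ast$ replaced by $y_2^\ast$ (the two expressions agree at the single point $x^\ast$), so its $r$-excessivity and domination of $g$ follow by running the Martin-measure argument of part (B) of Proposition \ref{l1b} at the positive maximizer $y_2^\ast$ --- equivalently, the degenerate specialization of Proposition \ref{thm1}'s measure construction in which no mass sits below $y_2^\ast$ --- which uses only $y_2^\ast\in\mathcal{M}$, $y_2^\ast>0$ and conditions (i), (iii); Lemma \ref{apu} then gives $V$ as in \eqref{tan11} (with $A=[y_2^\ast,\infty)$, or with $A=\{x^\ast\}\cup[y_2^\ast,\infty)$ via your collapse computation, which is correct). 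The region identification is then immediate: part (A) of Proposition \ref{l1b} applied to $y_2^\ast$ yields $(-\infty,x^\ast)\cup(x^\ast,y_2^\ast)\subset C$, the argument of Lemma \ref{lemma1} puts the maximizer $x^\ast$ in $\Gamma$, and $V=g$ on $[y_2^\ast,\infty)$. Your second and third paragraphs (the algebraic collapse of \eqref{value} and the appeal to Lemma \ref{apu} and recurrence) are fine, but they rest on the unestablished identification $y_1^\ast=x^\ast$ rather than on a direct verification of the degenerate candidate.
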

\begin{proof}
The statement is a direct implication of part (A) of Proposition \ref{l1b} and Proposition \ref{thm1}.
\end{proof}

\section{Explicit Illustration}

Our objective is now to illustrate the main results in Section 4 explicitly by assuming that the exercise reward reads as $g(x):=(x+K)^+$ with $K> 0$. Recall that $\mathcal{M}$ denotes the set of maximum points of the ratio $g/\psi_r$, cf. \eqref{M}.
Our main result on the value and the optimal stopping strategy are presented in the following:
\begin{proposition}
For all $\beta\in(1/2,1)$ and $K>0$ there is a unique critical discount rate $\hat{r}=\hat{r}(\beta,K)$ satisfying the identity
\begin{align}
\beta+ \beta\ln\left(\beta + \sqrt{\beta^2 +(2\beta-1){\rm{e}}^{2(\sqrt{2\hat{r}} K-1)}}\right) = \sqrt{\beta^2+(2\beta-1){\rm{e}}^{2(\sqrt{2\hat{r}} K-1)}}.\label{critical}
\end{align}
Moreover, $\hat{r}$ is increasing as a function of $\beta$.\\
(A) Assume that $r<\hat{r}$. Then, $\mathcal{M}=\{x^\ast\}$ with $x^\ast>0$.
The optimal stopping strategy is $\tau^\ast=\inf\{t\geq 0: X_t\geq x^\ast\}$ and the value
is as in \eqref{singval}.\\
(B) Assume that $r=\hat{r}$. Then $\mathcal{M}=\{x_1^\ast,x^\ast\},$
 where $x^\ast>0$ and
\begin{align}
x_1^\ast = \frac{1}{\theta}-K<0.\label{lower}
\end{align}
The optimal stopping strategy is
$\tau^\ast=\inf\{t\geq 0: X_t\in\{x_1^\ast\}\cup [x^\ast,\infty)\}$ and the value
is  as in \eqref{tan11}.\\
(C) Assume that $r>\hat{r}$. Then,  $\mathcal{M}=\{x_1^\ast\}$ where $x_1^\ast$ is as given in \eqref{lower}.
The optimal stopping strategy is
$\tau^\ast=\inf\{t\geq 0: X_t\in[x_1^\ast,y_1^\ast]\cup
[y_2^\ast,\infty)\}$, where $(y_1^\ast,y_2^\ast)\in
  (x_1^\ast,0)\times(0,\infty)$ constitute the unique solution of the equation system \eqref{repre}, and the value
is as in \eqref{value}.
\end{proposition}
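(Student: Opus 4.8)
The plan is to reduce the whole statement to the behaviour of the ratio $q(x):=g(x)/\psi_r(x)=(x+K)^+/\psi_r(x)$, whose set of maximisers is by definition $\mathcal M$, and then to feed the three possible shapes of $\mathcal M$ into Proposition~\ref{l1b}(B), Corollary~\ref{tangeeraus} and Proposition~\ref{thm1}. Writing $\theta=\sqrt{2r}$, I would first record that $q\equiv 0$ on $(-\infty,-K]$; that on $[-K,0]$ one has $q(x)=(x+K)\mathrm e^{-\theta x}$, so $q'(x)=\mathrm e^{-\theta x}(1-\theta(x+K))$ and $q$ is strictly increasing there if $\theta K\le 1$, while if $\theta K>1$ it has a unique interior maximiser $x_1^\ast=1/\theta-K\in(-K,0)$ with $q(x_1^\ast)=\theta^{-1}\mathrm e^{\theta K-1}$; and that on $[0,\infty)$ one has $q'(x)=N(x)/\psi_r(x)^2$ with $N(x):=\psi_r(x)-(x+K)\psi_r'(x)$, where $N'(x)=-\theta^2(x+K)\psi_r(x)<0$ and $N(0+)=1-K\psi_r'(0+)=\beta^{-1}\bigl(\beta-(1-\beta)\theta K\bigr)$, so $q$ is strictly decreasing on $[0,\infty)$ if $\theta K\ge\beta/(1-\beta)$ and otherwise has a unique interior maximiser $x^\ast>0$. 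Since $g'(0-)=g'(0+)=1>0$, Proposition~\ref{l1} gives $0\in C$, hence $0\notin\mathcal M$; together with $q(0)=K$ and $q(\pm\infty)=0$ this shows $\mathcal M\subseteq\{x_1^\ast\}\cup\{x^\ast\}$ whenever these points exist.

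Next I would pin down the critical rate. If $\theta K\le 1$ only $x^\ast$ exists (since $\beta/(1-\beta)>1\ge\theta K$) and $q(x^\ast)>q(0)=K$, so $\mathcal M=\{x^\ast\}$: Case~(A). If $1<\theta K<\beta/(1-\beta)$ both candidates are present and the outcome is governed by the sign of $\Psi(\theta):=q\bigl(x^\ast(\theta)\bigr)-\theta^{-1}\mathrm e^{\theta K-1}$. I would show $\Psi$ is strictly decreasing: $\partial_\theta\psi_r(x;\theta)=\tfrac{x}{2\beta}\bigl(\mathrm e^{\theta x}-(2\beta-1)\mathrm e^{-\theta x}\bigr)>0$ for $x>0$, so $\theta\mapsto q(x;\theta)$ is strictly decreasing for every $x>0$ and hence so is its envelope $\theta\mapsto\max_{x>0}q(x;\theta)=q(x^\ast(\theta))$, while $\tfrac{d}{d\theta}\bigl(\theta^{-1}\mathrm e^{\theta K-1}\bigr)=\theta^{-2}\mathrm e^{\theta K-1}(\theta K-1)>0$. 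As $\theta K\downarrow1$ one has $x_1^\ast\to0$ and $\Psi>0$, and as $\theta K\uparrow\beta/(1-\beta)$ one has $x^\ast\to0$ with $q(x^\ast)\to K<\theta^{-1}\mathrm e^{\theta K-1}$, so $\Psi<0$; hence there is a unique $\hat\theta$ in this range with $\Psi(\hat\theta)=0$, i.e.\ $q(x^\ast)=q(x_1^\ast)$. At $\hat\theta$, writing $M:=\theta^{-1}\mathrm e^{\theta K-1}$ for the common value, the tangency relations $\psi_r'(x^\ast)=1/M$ and $\psi_r(x^\ast)=(x^\ast+K)/M$ give, with $z:=\mathrm e^{\theta x^\ast}$, the quadratic $z^2-2\beta\mathrm e^{1-\theta K}z-(2\beta-1)=0$, so $z=\mathrm e^{1-\theta K}(\beta+A)$ with $A:=\sqrt{\beta^2+(2\beta-1)\mathrm e^{2(\theta K-1)}}$; substituting into the second relation, rewritten as $\ln z+\theta K+1=\mathrm e^{\theta K-1}z/\beta$, collapses to $A=\beta+\beta\ln(\beta+A)$, which is exactly \eqref{critical}. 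Uniqueness of $\hat r$ as a solution of \eqref{critical} is then intrinsic: $P(A):=A-\beta-\beta\ln(\beta+A)$ is strictly increasing (as $P'(A)=A/(\beta+A)>0$) with $P(0)=-\beta(1+\ln\beta)<0$ for $\beta\in(1/2,1)$ and $P(+\infty)=+\infty$, so it has one zero, and $\theta\mapsto A(\theta)$ is a strictly increasing bijection of $(0,\infty)$. Consequently Case~(A) is precisely $r<\hat r$, Case~(B) is $r=\hat r$, and Case~(C) is $r>\hat r$ — the latter because for $\hat\theta<\theta$ with $\theta K<\beta/(1-\beta)$ one has $\Psi(\theta)<0$, while for $\theta K\ge\beta/(1-\beta)$ one has $\mathcal M=\{x_1^\ast\}$ outright.

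For monotonicity of $\hat r$ in $\beta$ I would argue by comparison rather than by differentiating \eqref{critical}: at $r=\hat r(\beta)$ the common level $\theta^{-1}\mathrm e^{\theta K-1}$ is free of $\beta$, whereas $\partial_\beta\psi_r(x;\beta)=\tfrac1{2\beta^2}\bigl(\mathrm e^{-\theta x}-\mathrm e^{\theta x}\bigr)<0$ for $x>0$, so raising $\beta$ strictly raises $\max_{x>0}q(x;\beta)=q(x^\ast)$ (and enlarges $\beta/(1-\beta)$, keeping $\theta K$ admissible); hence for $\beta'>\beta$ the rate $\hat r(\beta)$ already lies in Case~(A) for parameter $\beta'$, forcing $\hat r(\beta)<\hat r(\beta')$. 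The value formulas then follow from the structural results. In Case~(A), $x^\ast>0>-K$ gives $g\in C^\infty([x^\ast,\infty))$ and $g''(x)-2rg(x)=-2r(x+K)<0$ there, so Proposition~\ref{l1b}(B) yields $\tau^\ast=\inf\{t\ge0:X_t\ge x^\ast\}$ and \eqref{singval}. In Case~(B), $x_1^\ast+K=1/\theta>0$ so the same regularity holds, and Corollary~\ref{tangeeraus} gives $\Gamma=\{x_1^\ast\}\cup[x^\ast,\infty)$ and \eqref{tan11}; equivalently one checks directly that the function equal to $M\psi_r$ on $(-\infty,x^\ast]$ and to $g$ on $[x^\ast,\infty)$ is $r$-excessive (smooth fit $M\psi_r'(x^\ast)=g'(x^\ast)=1$ at $x^\ast$, and a multiple of $\psi_r$ near the skew point) and dominates $g$, so Lemma~\ref{apu} applies. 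In Case~(C), $\mathcal M=\{x_1^\ast\}$ with $x_1^\ast<0$, and Proposition~\ref{thm1} delivers the three-boundary policy of \eqref{value} with $(y_1^\ast,y_2^\ast)$ the unique solution of \eqref{repre}, its conditions (i) and (iii) being immediate from the linearity of $g$ on $[x_1^\ast,\infty)$ and $-2r(x+K)<0$.

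The point I expect to need the most care is condition~(ii) of Proposition~\ref{thm1} in Case~(C): there it reads $(1-\beta)\mathrm e^{\theta K-1}>\beta$, i.e.\ $A(\theta)>\beta^2/(1-\beta)$, and since $A(\hat\theta)<\beta^2/(1-\beta)$ — because $P\bigl(\beta^2/(1-\beta)\bigr)=\beta\bigl(\tfrac\beta{1-\beta}-1-\ln\tfrac\beta{1-\beta}\bigr)>0$ for $\beta>\tfrac12$ — this inequality fails on an interval just above $\hat r$. So there one cannot quote Proposition~\ref{thm1} verbatim; instead the solvability of \eqref{repre} in $(x_1^\ast,0)\times(0,\infty)$ must be re-established for the piecewise-linear payoff directly from the explicit exponential forms of the functions whose level curves define $y_1^\ast$ and $y_2^\ast$ (these are $h_1,h_2$ with $h_1'$ and $h_2'$ proportional to $g''-2rg=-2r(x+K)$, hence sign-definite), after which the representing-measure verification of $r$-excessivity of \eqref{value} goes through as in Proposition~\ref{thm1}. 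This bookkeeping is the main technical burden; everything else is elementary calculus on the explicit ratio $q$.
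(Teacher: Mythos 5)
Your overall route --- reduce everything to the maximisers of $q=g/\psi_r$, derive \eqref{critical} from equality of the two local maxima, then feed the shape of $\mathcal{M}$ into Proposition \ref{l1b}(B), Corollary \ref{tangeeraus} and Proposition \ref{thm1} --- is the paper's route, and your algebra (the quadratic for $z=\mathrm{e}^{\theta x^\ast}$, its root, and the collapse to \eqref{critical}) reproduces \eqref{polynomial}, \eqref{1vaihe} and \eqref{sieva}. Two sub-arguments differ genuinely and are correct: you obtain existence of $\hat\theta$ and the identification of (A)/(B)/(C) with $r<\hat r$, $r=\hat r$, $r>\hat r$ from the strict monotonicity in $\theta$ of $\Psi(\theta)=\max_{x>0}q(x;\theta)-\theta^{-1}\mathrm{e}^{\theta K-1}$ (using $\partial_\theta\psi_r>0$ on $(0,\infty)$), where the paper instead studies $C(\theta,\beta)$ and settles (A) and (C) with the terse remark that the value is non-increasing in $r$; and you prove that $\beta\mapsto\hat r(\beta)$ is increasing by a comparison argument ($\partial_\beta\psi_r<0$ for $x>0$ raises the positive-side maximum while $q(x_1^\ast)$ is $\beta$-free), where the paper differentiates $C(\hat\theta,\beta)=0$ implicitly and checks the sign \eqref{sensitivity}. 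One harmless slip: $\theta\mapsto A(\theta)$ maps $(0,\infty)$ onto $\bigl(\sqrt{\beta^2+(2\beta-1)\mathrm{e}^{-2}},\infty\bigr)$, not onto $(0,\infty)$; this does not matter because you only use injectivity for uniqueness, existence of a root of \eqref{critical} being supplied by the zero of $\Psi$.

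The one genuine gap is the one you flag yourself, and it is real: in case (C) condition (ii) of Proposition \ref{thm1} reads $(1-\beta)\mathrm{e}^{\theta K-1}>\beta$, i.e.\ $\theta>\theta^\ast$ with $\theta^\ast$ as in \eqref{crittheta}, while case (C) only gives $\theta>\hat\theta$, and $\hat\theta<\theta^\ast$ (this is exactly $C(\theta^\ast,\beta)<0$, equivalently your inequality $A(\hat\theta)<\beta^2/(1-\beta)$). Hence for $r\in(\hat r,(\theta^\ast)^2/2]$ Proposition \ref{thm1} cannot be quoted verbatim; the same remark applies to the hypotheses of Corollary \ref{tangeeraus} at $r=\hat r$, though there your alternative direct verification of excessivity and domination plus Lemma \ref{apu} already closes case (B). For case (C) on that sub-range you only sketch the repair: when (ii) fails, the auxiliary point $\hat x$ of the proof of Proposition \ref{thm1} need not exist, so existence of a solution of \eqref{repre} in $(x_1^\ast,0)\times(0,\infty)$ must come from comparing the two monotone level curves at the left endpoint, i.e.\ from proving $\tilde y_{x_1^\ast}>\hat y_{x_1^\ast}$ --- and this is precisely where the case-(C) hypothesis $r>\hat r$ (that $x_1^\ast$ is the strict global maximiser of $q$) has to enter; combined with $\tilde y_{0-}=0<\hat y_{0-}$ and the opposite monotonicity of the curves it would give the unique crossing, after which the representing-measure verification goes through unchanged. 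Until that inequality is proved, your argument establishes case (C) only for $r>(\theta^\ast)^2/2$. To be fair, the paper's own proof is equally silent here (it never verifies the hypotheses of Proposition \ref{thm1} in case (C)), so your observation sharpens rather than misses the issue; but as a proof, your text still owes this step.
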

\begin{proof}
In what follows we will show that the three different cases (A)-(C) appearing above and corresponding to the cases characterized in Proposition \ref{l1b}, Corollary \ref{tangeeraus}, and Proposition \ref{thm1}
arise depending on the precise magnitude of the key parameters $\beta, r$ and $K$. We start by proving that for any $\beta\in(1/2,1)$ and $K>0$ equation \eqref{critical} has a unique solution $\hat{r}$.
To this end, fix $K>0$ and consider for $\theta>0$ and $\beta\in[1/2,1]$ the function
$$
C(\theta,\beta):=\beta+ \beta\ln\left(\beta + \sqrt{\beta^2 +(2\beta-1)\textrm{e}^{2(\theta K-1)}}\right) - \sqrt{\beta^2+(2\beta-1)\textrm{e}^{2(\theta K-1)}}.
$$
Standard differentiation yields
\begin{align}
C_\theta(\theta,\beta)&=-\frac{(2\beta-1)\textrm{e}^{2(\theta K-1)}K}{\beta +\sqrt{\beta^2 +(2\beta-1)\textrm{e}^{2(\theta K-1)}}}<0\label{herkkyys1}\\
C_\beta(\theta,\beta)&= 1+\ln\left(\beta + \sqrt{\beta^2 +(2\beta-1)\textrm{e}^{2(\theta K-1)}}\right)+\frac{\beta - \sqrt{\beta^2 +(2\beta-1)\textrm{e}^{2(\theta K-1)}}}{2\beta-1}\label{herkkyys2}
\end{align}
Consequently, from \eqref{herkkyys1}, $C$ is monotonically decreasing as a function of $\theta$. In particular, for all $\beta\in(1/2,1)$ we have
\begin{align*}
C(1/K,\beta)&=\beta+ \beta\ln\left(\beta + \sqrt{\beta^2 +2\beta-1}\right) - \sqrt{\beta^2+2\beta-1}>0,\\
C(\theta^\ast,\beta)&=\beta\left(\ln\left(\frac{\beta}{1-\beta}\right) -\frac{2\beta-1}{1-\beta}\right)<0,
\end{align*}
where
\begin{align}
\theta^\ast=\left(1+\ln\left(\frac{\beta}{1-\beta}\right)\right)\frac{1}{K} > \frac{1}{K}.\label{crittheta}
\end{align}
Invoking the monotonicity and the continuity of $C$ as a function of $\theta$ shows that equation  \eqref{critical} has a unique solution, as claimed.

Next we show that $\beta\mapsto\hat{r}(\beta)$ is increasing. To see that this is indeed the case, consider the function $\hat{\theta} :=\sqrt{2\hat{r}}$ and observe that implicit differentiation of equation
$C(\hat{\theta},\beta)=0$ yields
\begin{align}
\hat{\theta}'=-\frac{C_\beta(\hat{\theta},\beta)}{C_\theta(\hat{\theta},\beta)}.\label{suunta}
\end{align}
Since  $C_{\theta}< 0$ by \eqref{herkkyys1}, it is sufficient to study the sign of $C_\beta$ along the solution curve $\beta\mapsto\hat{\theta}(\beta)$.
Since $\hat{\theta} < \theta^\ast$ and $\hat{\theta} K-1<\ln\left(\beta/(1-\beta)\right)$, we have from \eqref{herkkyys2} using the identity $C(\hat{\theta},\beta)=0$ that
\begin{align}
C_\beta(\hat{\theta},\beta)= \frac{1-\beta}{\beta(2\beta-1)}\left(\frac{\beta^2}{1-\beta} - \sqrt{\beta^2 +(2\beta-1)\textrm{e}^{2(\hat{\theta} K-1)}}\right)>0.\label{sensitivity}
\end{align}
Therefore, from \eqref{suunta} and \eqref{herkkyys1} it follows that $\hat{\theta}'>0$ and, hence, $\hat{r}$ is increasing.

We now proceed to proving (A)-(C). From our general analysis we known that we should consider the maximum points of the function
$$
u_r(x) := \frac{(x+K)^+}{\psi_r(x)} = \begin{cases}
 \displaystyle{\frac{2\beta(x+K)}{\textrm{e}^{\theta x}+\left(2\beta-1\right)\textrm{e}^{-\theta x}}},&x>0,\\
\textrm{e}^{-\theta x}(x+K)^+,&x\leq 0.
\end{cases}
$$
Standard differentiation yields
\begin{align*}
l(x)&:=\psi_r^2(x)u_r'(x)\\ &= \begin{cases}
 \frac{1}{2\beta}\textrm{e}^{\theta x}(1-\theta (x+K))+\left(1-\frac{1}{2\beta}\right)\textrm{e}^{-\theta x}(1+\theta(x+K)),&x>0,\\
\textrm{e}^{\theta x}(1-\theta(x+K)),&-K < x < 0,\\
0,&x<-K.
\end{cases}
\end{align*}
We immediately notice the following $$l(0-)=1-\theta K,\quad l(0+)=1-\left(\frac{1}{\beta}-1\right)\theta K, \quad l(0+)-l(0-)=\frac{2\beta-1}{\beta}\theta K>0,$$ and
$\lim_{x\rightarrow\infty}l(x)=-\infty.$ Moreover, since for $x>-K$
$$
l'(x)= \begin{cases}
 -\theta^2(x+K)\left(\frac{1}{2\beta}\textrm{e}^{\theta x}+\left(1-\frac{1}{2\beta}\right)\textrm{e}^{-\theta x}\right),&x>0,\\
-\textrm{e}^{\theta x}\theta^2(x+K),&-K<x<0,
\end{cases}
$$
two different configurations may arise depending on the precise values of $\theta, \beta,$ and $K$. First, if $\theta K\leq 1$, then $l(0-)\geq0$ and the monotonicity
of $l$ guarantees that $u_r$ attains a unique global maximum
at  $x^\ast>0$ satisfying the ordinary first order condition $u_r'(x^\ast)=0$ which is equivalent with
\begin{align}\label{optc1}
\textrm{e}^{\theta x^\ast}(1-\theta (x^\ast+K))+\left(2\beta-1\right)\textrm{e}^{-\theta x^\ast}(1+\theta(x^\ast+K))=0.
\end{align}
This case corresponds to the one characterized in part (B) of Proposition \ref{l1b} and, hence, proves claim (A) when  $\theta K\leq 1$.

Second, if $\theta K>1$ then $l(-K)=\textrm{e}^{-\theta K}>0$ and the monotonicity of $l$ on $(-K,0)$ guarantees that $u_r$ attains
a local maximum at the point
$$
x_1^\ast = \frac{1}{\theta}-K<0.
$$
If
$l(0+)=1-\left(\frac{1}{\beta}-1\right)\theta K>0$, then $u_r$ attains a local maximum at the threshold  $x^\ast>0$ satisfying \eqref{optc1} as well. However, if $l(0+)\leq 0$, then the monotonicity of $l$ implies that $x_1^\ast$ constitutes a global maximum point of $u_r$ and $\mathcal{M}=\{x_1^\ast\}$.
Hence, in the case where $l(0+)>0$ the set $\mathcal{M}$ has at most two points.
In order to determine the parameter values for which $\mathcal{M}=\{x_1^\ast,x^\ast\}$ we consider the equation
\begin{align}
u_r(x^\ast)-u_r(x_1^\ast) =0.\label{equil}
\end{align}
Since $u_r'(x^\ast)=u_r'(x_1^\ast)=0$ it holds that
$u_r(x^\ast)=1/\psi_r'(x^\ast)$ and $u_r(x_1^\ast)=1/\psi_r'(x_1^\ast)$. Hence, \eqref{equil} is equivalent with
\begin{align}
 \frac{1}{\psi_r'(x^\ast)}-\frac{1}{\psi_r'(x_1^\ast)}=\frac{2\beta}{\theta(\textrm{e}^{\theta x^\ast}-(2\beta-1)\textrm{e}^{-\theta x^\ast})}-\frac{1}{\theta}\textrm{e}^{\theta K-1} =0.\label{equilb}
\end{align}
Consequently, $\mathcal{M}=\{x_1^\ast, x^\ast\}$ with $x^\ast>0$ as in \eqref{optc1} if and only if $x^\ast$ satisfies also \eqref{equilb}, which is  equivalent with
\begin{align}
\textrm{e}^{2\theta{x^{\ast}}}-2\beta \textrm{e}^{1-\theta K}\textrm{e}^{\theta {x^{\ast}}}-(2\beta-1)=0\label{polynomial}
\end{align}
implying that
\begin{align}
{x^{\ast}}= \frac{1}{\theta}\ln\left(\beta \textrm{e}^{1-\theta K} +
\sqrt{\beta^2 \textrm{e}^{2(1-\theta K)}+(2\beta-1)}\right).
\label{1vaihe}
\end{align}
Substituting the expression for $2\beta-1$ obtained from \eqref{polynomial} into \eqref{optc1} yields
\begin{align}
{\rm{ e}}^{\theta x^\ast}=\beta {\rm e}^{1-\theta K} (1+\theta(x^\ast+K)).\label{sieva}
\end{align}
By applying \eqref{1vaihe} in \eqref{sieva} we conclude that $\mathcal{M}=\{x_1^\ast,x^\ast\}$ if and only if $\beta\in[1/2,1]$ and $\theta > 0$ are such that
$C(\beta,\theta)=0$, as claimed. This proves case (B), and also (A) and (C) since the value is a non-increasing function of $r$.
\end{proof}
\begin{remark}
For $\beta=1$ equation (\ref{critical}) with $\hat\theta:=\sqrt{2\hat
  r}$ reads as
$$
1+ \ln\left(1 + \sqrt{1 +{\rm{e}}^{2(\hat{\theta} K-1)}}\right) = \sqrt{1+{\rm{e}}^{2(\hat{\theta} K-1)}},
$$
and the unique solution is given by $\hat{\theta}K\approx
1.64132.$ Notice that $\beta\mapsto\theta(\beta)$ being increasing the
limit of $\theta(\beta)$ as $\beta\downarrow 1/2$ exists. As
$\beta\downarrow 1/2$ then necessarily  $x^*$ in (\ref{1vaihe}) tends
to 0. Therefore, $\lim_{\beta\downarrow 1/2}\theta(\beta)=1/K.$
Consequently, the critical parameter boundary
$\beta\mapsto\theta(\beta)$ is an increasing function connecting the
extremal points $(1/2,1/K)$ and $(1,1.64132/K)$. This is illustrated
in Figure \ref{sallittu} when $K=1.$
\end{remark}

\begin{figure}[!ht]
\begin{center}
\includegraphics[width=0.6\textwidth]{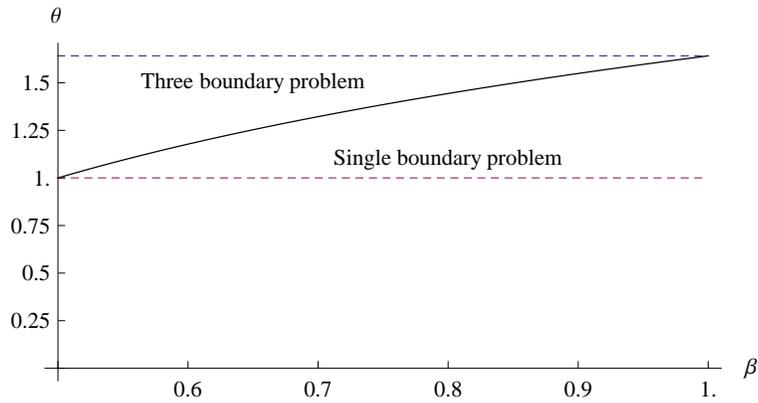}
\caption{\small Critical Boundary; with $K=1$}\label{sallittu}
\end{center}
\end{figure}

The optimal boundaries associated with the optimal exercise strategies are illustrated as functions of the skewness parameter $\beta$ in Figure \ref{2reunaongelma} under the assumptions that
$K=1$ and $r=0.95$. As is clear from the figure, the considered stopping problem constitutes a three-boundary problem as long as the skewness parameter $\beta$ remains below the critical level $\beta^\ast$ which under our parameter assumptions is $\beta^\ast\approx 0.7445$. As soon as skewness exceeds this critical level, the problem becomes a single boundary problem, where the decision maker waits until the underlying hits the upper threshold maximizing the ratio $(x+K)^+/\psi_r(x)$. The reason for this observation is clear: for sufficiently low values of $\beta$ the attainable intertemporal gains accrued by waiting and postponing the timing decision further into the future exceed the return accrued by exercising immediately in a neighborhood of the origin. As the skewness parameter increases, more and more of the excursions are expected to end to the positive side, thus increasing the incentives to wait for higher payoffs.

\begin{figure}[!ht]
\begin{center}
\includegraphics[width=0.6\textwidth]{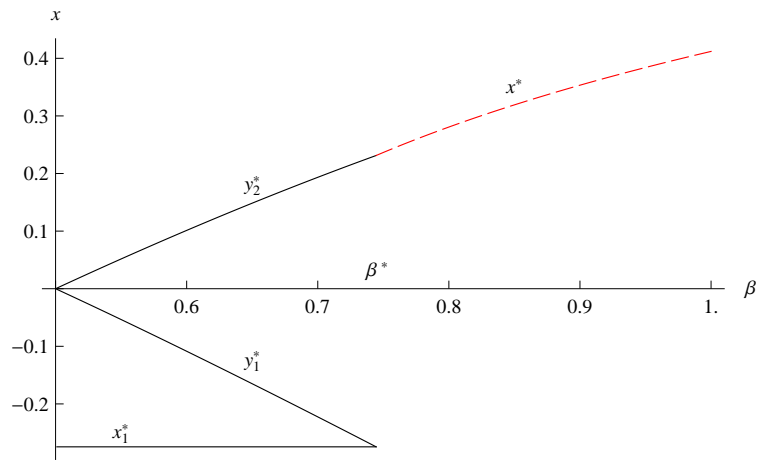}
\caption{\small Optimal Stopping Boundaries; with $K=1$ and $r=0.95$.}\label{2reunaongelma}
\end{center}
\end{figure}

The optimal boundaries associated with the optimal exercise strategies are, in turn, illustrated as functions of the parameter $\theta$ in Figure \ref{2reunaongelmab} under the assumptions that
$K=1$ and $\beta=0.55$. In contrast with the effect of the skewness parameter $\beta$, higher discounting accelerates optimal timing and, thus, decreases the incentives to wait. Accordingly, we now notice from Figure \ref{2reunaongelmab} that the considered problem constitutes a single boundary problem only as long as the discount rate is lower than the critical level $\hat{r}\approx 0.5983$. Above this critical level waiting for for future potentially higher payoffs is no longer optimal at all states and the optimal exercise strategy becomes a three-boundary stopping rule.

\begin{figure}[!ht]
\begin{center}
\includegraphics[width=0.6\textwidth]{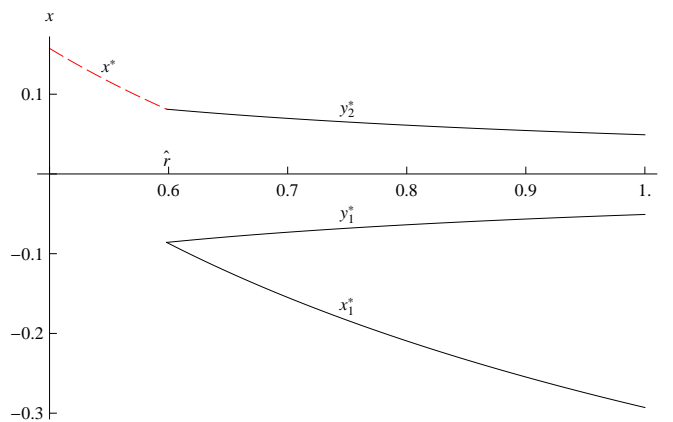}
\caption{\small Optimal Stopping Boundaries; with $K=1$  and $\beta=0.55$.}\label{2reunaongelmab}
\end{center}
\end{figure}

\section{Conclusions}

We studied a class of optimal stopping problems for SBM. We showed that the local directional predictability resulting from the presence of a skew point has a nontrivial and somewhat surprising impact on the optimal stopping policy of the underlying diffusion. More precisely, we delineated a set of relatively weak monotonicity conditions satisfied by a large class of exercise payoffs under which the skew point is always included in the continuation region. In that case postponing rational exercise is always worthwhile on a neighborhood of the skew point. An interesting implication of this finding is that the problem can become a three-boundary problem even when the exercise payoff is linear. We also analyzed the comparative static properties of the value and optimal timing policy and established that the value is an increasing function of skewness for increasing payoffs. In accordance with this observation higher skewness expands the continuation region and in that way increases the incentives to wait.

There are two natural directions towards which our analysis could be extended. First,  given that skewness can be introduced also for other diffusions beyond Brownian motion, it would be naturally of interest to consider how the singularity of the underlying diffusion affects the optimal stopping strategies and their values within a more general modeling framework. Second, given the close connection of optimal stopping with impulse control and bounded variation control problems, it would naturally be of interest to investigate how skewness affects the optimal policies in those associated problems. Both these extensions are out of the scope of this study and left for future research.\\

\noindent{\bf Acknowledgement:} The authors are grateful to {\em Sören Christensen} for constructive comments.

\end{document}